\providecommand\@dotsep{5}
\def\listtodoname{List of Todos}
\def\listoftodos{\@starttoc{tdo}\listtodoname}
\numberwithin{equation}{section}
\newtheorem{Th}{Theorem}[section]
\newtheorem{Prop}[Th]{Proposition}
\newtheorem{Lem}[Th]{Lemma}
\newtheorem{Cor}[Th]{Corollary}
\newtheorem{Rem}[Th]{Remark}
\newtheorem{Ex}[Th]{Example}
\newenvironment{altproof}[1]
{\noindent
	{\em Proof of {#1}}.}
{\nopagebreak\mbox{}\hfill $\Box$\par\addvspace{0.5cm}}
\newcommand{\weakto}{\rightharpoonup}
\newcommand{\D}{\mathcal{D}^{1,2}(\RN)}
\newcommand{\Dr}{\mathcal{D}^{1,2}_{\cO(N)}(\RN)}
\newcommand{\Dnr}{\mathcal{D}^{1,2}_{\cO}(\RN)}
\newcommand{\DO}{\mathcal{D}^{1,2}_{\cO'}(\RN)}
   \newcommand{\vp}{\varphi}
   \newcommand{\eps}{\varepsilon}
   \def\supp{\mathrm{supp}}
   \def\N{\mathbb{N}}
   \def\R{\mathbb{R}}
   \def\K{\mathcal{K}}
   \def\RN{\mathbb{R}^N}
   \def\n{\nabla}
	\def\de{\partial}
\def\a{\alpha}
\def\b{\beta}
\def\s{\sigma}
\def\irn{\int_{\RN}}
\def\bl#1\el{\textcolor{blue}{#1}} 
\def\br#1\er{\textcolor{red}{#1}} 
   \newcommand{\cO}{{\mathcal O}}
\newcommand{\cC}{{\mathcal C}}
\newcommand{\cI}{{\mathcal I}}
\newcommand{\cJ}{{\mathcal J}}
\newcommand{\dis}{\displaystyle}
\title[Nonlinear scalar field equation]{Nonlinear scalar field equation\\with competing nonlocal terms}
\author[P. d'Avenia]{Pietro d'Avenia}
\author[J. Mederski]{Jaros\l aw Mederski}
\author[A. Pomponio]{Alessio Pomponio}
\address[P. d'Avenia and A. Pomponio]{\newline\indent
Dipartimento di Meccanica, Matematica e Management
\newline\indent 
Politecnico di Bari
\newline\indent
Via Orabona 4,  70125  Bari, Italy}
\email{\href{mailto:pietro.davenia@poliba.it}{pietro.davenia@poliba.it}}
\email{\href{mailto:alessio.pomponio@poliba.it}{alessio.pomponio@poliba.it}}
\address[J. Mederski]{ \newline\indent 
	Institute of Mathematics,\newline\indent 
	Polish Academy of Sciences \newline\indent 
	ul. \'Sniadeckich 8, 00-656 Warsaw, Poland \newline\indent 
	and\newline\indent  
	Departement of Mathematics, Institute for Analysis,\newline\indent 
	Karlsruhe Institute of Technology (KIT), \newline\indent 
	D-76128 Karlsruhe, Germany
}
\email{\href{mailto:jmederski@impan.pl}{jmederski@impan.pl}}
\thanks{P. d'Avenia and A. Pomponio are members of GNAMPA of INdAM and are partially supported by PRIN 2017JPCAPN {\em Qualitative and quantitative aspects of nonlinear PDEs}. P. d'Avenia is also supported by FRA2019 of Politecnico di Bari. J. Mederski was partially supported by the National Science Centre, Poland, Grant No. 2017/26/E/ST1/00817 and by the Deutsche Forschungsgemeinschaft (DFG, German Research Foundation) – Project-ID 258734477 – SFB 1173 during the stay at Karlsruhe Institute of Technology}
\subjclass[2010]{Primary 35J61; Secondary 35B33, 35B38, 35Q55, 35J20.}
\date{\today}
\keywords{Elliptic equation, nonlocal problem,
Riesz potential, Choquard equation, Hartree equation, Mountain Pass.}
\begin{document}
\begin{abstract} We find radial and nonradial solutions to the following nonlocal problem
	$$-\Delta u +\omega u= \big(I_\alpha\ast F(u)\big)f(u)-\big(I_\beta\ast G(u)\big)g(u) \text{ in } \mathbb{R}^N$$
	under general assumptions, in the spirit of Berestycki and Lions, imposed on $f$ and $g$, where $N\geq 3$, $0\leq \beta \leq \alpha<N$, $\omega\geq 0$,
	$f,g:\mathbb{R}\to \mathbb{R}$ are continuous functions with corresponding primitives $F,G$,
	and $I_\alpha,I_\beta$ are the Riesz potentials. If $\beta>0$, then we deal with two competing nonlocal terms modelling attractive and repulsive interaction potentials. 
\end{abstract}

\maketitle

\section{Introduction}

This paper mainly deals with the following problem
\begin{equation}\label{eq}
-\Delta u
= \big(I_\alpha\ast F(u)\big)f(u)-\big(I_\beta\ast G(u)\big)g(u)
\qquad \hbox{ in }\R^N,
\end{equation}
where $N\geq 3$, $0\le \b\le \a<N$, 
$f,g:\R\to \R$ are continuous functions with corresponding primitives
$$F(s)=\int_0^s f(t) dt,\quad
G(s)=\int_0^s g(t) dt.$$
Moreover $I_\gamma:\R^N\to\R$ is the Riesz potential 
$$I_\gamma(x):=\frac{\Gamma(\frac{N-\gamma}{2})}{\Gamma(\frac{\gamma}{2})\pi^{N/2}2^\gamma |x|^{N-\gamma}}\quad\hbox{for }x\in\R^N\setminus\{0\}\hbox{ and }\gamma\in (0,N),$$
while we set $I_0=\delta_0$, namely the identity for the convolution,where $\delta_0$ is the Dirac delta at zero.

If $N=3$, $\alpha=2$, $\beta=0$, $F(s)=\frac{1}{\sqrt{2}}|s|^2$ and $G(s)=s$, then \eqref{eq} is the well-known  Choquard, or Choquard-Pekar equation
\begin{equation*}
-\Delta u+u = \big(I_2\ast |u|^2\big)u\qquad \hbox{ in }\R^N.
\end{equation*} 
This equation comes, for instance, from an approximation to the Hartree-Fock theory of a
plasma \cite{Pekar,Lieb}   or it can be also derived in the framework of the Schr\"odinger-Newton system, (see \cite{MPT}). 
A variational approach for this case was presented by Lieb \cite{Lieb} and Lions \cite{Lions}. 

More generally, if $N\geq 3$, $F(s)=\frac{1}{\sqrt{p}}|s|^p$, for suitable $p$, $\alpha>0$ and $G(s)=s$, then weak solutions to \eqref{eq} can be obtained by means of critical points of the associated functional. If, for instance, $\frac{N+\alpha}{N}<p<\frac{N+\alpha}{N-2}$ and $\beta =0$,  then, according to the work of Moroz and Van Schaftingen \cite{MVJFA}, the Hardy-Littlewood-Sobolev inequality implies that $\big(I_\alpha\ast F(u)\big)F(u)\in L^1(\R^N)$ for $u\in H^1(\R^N)$. Moreover the associated functional is well-defined and of class $\cC^1$ on $H^1(\R^N)$,  and its  critical points correspond to solutions to
\begin{equation}\label{eqVM}
-\Delta u+u = \big(I_\alpha\ast F(u)\big)f(u)\qquad \hbox{ in }\R^N.
\end{equation} 
A ground state solution and its properties were obtained in \cite{MVJFA}. The same authors in \cite{MVTrans} also studied the existence of solutions with a general nonlinearity $F$ in the spirit of the classical result of Berestycki and Lions \cite{BL1}, namely 
\begin{equation}\label{eq:assumVM}
|sf(s)|\leq C\big(|s|^{\frac{N+\alpha}{N}}+|s|^{\frac{N+\alpha}{N-2}}\big),\quad \lim_{s\to 0}F(s)/|s|^{\frac{N+\alpha}{N}}=\lim_{|s|\to+\infty}F(s)/|s|^{\frac{N+\alpha}{N-2}}=0,\quad F(s_0)\neq 0,
\end{equation} 
for some  $s_0\neq 0$ and $C>0$,
see also  the survey \cite{MVSurvey} and the references therein. 
Note that, if $\alpha=0$ in \eqref{eqVM}, since $I_0\ast F(u)=F(u)$,  \eqref{eq:assumVM} covers the Berestycki-Lions growth assumptions only  for the nonnegative (attractive) nonlinearity $F^2(s)\geq 0$  of the corresponding energy functional (see  (3.3) of \cite{BL1}).

On the other hand, as for instance in the Hartree-Fock theory, the interaction potential could be also repulsive \cite{Benguria,LionsCMP}, i.e. with $\beta>0$ and a non-trivial $G(s)\ge 0$.
Moreover problems similar to \eqref{eq} may admit some local terms as well, see also  \cite{MVSurvey} and the references therein.

Our aim is to investigate both nonlinear phenomena with  both nonlocal terms ($0<\beta\leq\alpha$)  in \eqref{eq}, since, in the limiting case $\alpha=\beta=0$, we can fully cover the Berestycki and Lions assumptions \cite{BL1}. 

We impose the following assumptions on $f$ and $g$:
\begin{enumerate}[label=($H_\arabic*$),ref=$H_\arabic*$]
	\item \label{h1} there is a constant $C>0$ and  $p\in \left(\frac{2\b}{N-2}, \frac{N+\b}{N-2}\right]$ such that $|sf(s)|\leq C|s|^{\frac{N+\alpha}{N-2}}$ and $ 0\leq g(s)s\leq C\big(|s|^{p}+|s|^{\frac{N+\beta}{N-2}}\big)$ for $s\in\R$;
	\item \label{ass0} $\displaystyle \lim_{s\to 0}\frac{F(s)}{|s|^\frac{N+\alpha}{N-2}}=\displaystyle \lim_{|s|\to +\infty}\frac{F(s)}{|s|^\frac{N+\alpha}{N-2}}= 0$;
	\item \label{xi1} there is $s_0>0$ such that $F(s_0)\neq 0$; if $\alpha=\beta$, then we assume also $F(s_0)>G(s_0)$.
\end{enumerate}

Observe that, if  $0\le\beta<\frac{N-2}{2}$, then, due to the continuity of $g$, we can take $p=1\in\left(\frac{2\b}{N-2}, \frac{N+\b}{N-2}\right]$.

We remark that these kinds of assumptions follow naturally from the local case, namely when $\alpha=\beta=0$, and equation \eqref{eq} becomes simply 
\begin{equation}\label{eq:BL}
-\Delta u = h(u) \qquad\hbox{ in }\RN.
\end{equation}
This problem has been studied in \cite{BL1} and \cite{Struwe,StruweMA}, under general assumptions. In particular, in \cite{Struwe,StruweMA} Struwe considered  a continuous and odd function $h:~\R\to~\R$ with primitive $H(s)=\int_0^s h(t)\,dt$ such that
\begin{enumerate}[label=(\roman*),ref=\roman*]
	\item \label{intri}$-\infty\leq \limsup_{s\to 0}h(s)s/|s|^{\frac{2N}{N-2}}\leq 0$;
	\item \label{intrii}$-\infty\leq \limsup_{|s|\to +\infty}h(s)s/|s|^{\frac{2N}{N-2}}\leq 0$;
	\item \label{intriii}there is $s_0>0$ such that $ H(s_0)>0$.
\end{enumerate}
Observe that the above assumptions contain those in \cite{BL1}. 
As usual,  by the maximum principle, it is enough to solve \eqref{eq:BL} when $\limsup_{|s|\to +\infty}h(s)s/|s|^{\frac{2N}{N-2}}=0$. 
Now, taking $F$ and $G$ even functions such that $$
F^2(s)=\int_0^s\max\{h(t),0\}\,dt
\quad \hbox{and} \quad 
G^2(s)=\int_0^s\max\{-h(t),0\}\,dt, \qquad \hbox{ for }s\geq 0,
$$
we get $H(s)=F^2(s)-G^2(s)$ and, in the local case $\alpha=\beta=0$, assumptions  \eqref{ass0} and \eqref{xi1} are clearly satisfied. 
Moreover $F$ and $G$ satisfy the following condition
\begin{enumerate}
	\item[$(H_1^*)$]  there is a constant $C>0$ such that $|(F^2)'(s)s|\leq C|s|^{\frac{2N}{N-2}}$ and $ 0\leq (G^2)'(s)s\leq C\big(|s|^{2}+|s|^{\frac{2N}{N-2}}\big)$ for $s\in\R$.
\end{enumerate}
This is a  slightly weaker variant of \eqref{h1}, which is essentially designed  for the nonlocal problem.
In fact, with our argument, one can provide a different proof of the existence of a radial solution under assumptions (\ref{intri})--(\ref{intriii}) from \cite{Struwe,StruweMA}.\\
Further progress on the Berestycki-Lions problem \eqref{eq:BL} has been made in \cite{MederskiBN,MederskiBNZero,JeanjeanLu}; see also the references therein.

We look for a {\em weak solution} $u\in\D$ to \eqref{eq}, i.e.
\begin{equation*}
\int_{\R^N}\nabla u \cdot \n \psi\, dx
=\int_{\R^N} \big(I_\a\ast F(u)\big)f(u)\psi\,dx
-\int_{\R^N} \big(I_\beta\ast G(u)\big)g(u)\psi\,dx
\end{equation*}
for any $\psi\in\cC_0^{\infty}(\R^N)$, where $\D$ stands for the completion of $\cC_0^{\infty}(\R^N)$ with respect to the norm $\|\nabla \cdot\|_2$. 

At least formally solutions of \eqref{eq} are critical points of the functional 
$\cI:\D\to \R\cup \{+\infty\}$ defined as
$$\cI(u)=\int_{\R^N}|\nabla u|^2\, dx-\int_{\R^N} \big(I_\alpha\ast F(u)\big)F(u)\,dx+\int_{\R^N} \big(I_\beta\ast G(u)\big)G(u)\,dx,$$
where $\D$. 
Since $|F(s)|\le C|s|^\frac{N+\alpha}{N-2}$ for some constant $C>0$, we have that $\big(I_\alpha\ast F(u)\big)F(u)\in L^1(\R^N)$. On the other hand $\big(I_\beta\ast G(u)\big)G(u)\in L^1_{\rm loc}(\R^N)$ and need not be integrable in $\R^N$. Therefore $\cI$ may be infinite on a dense subset of $\D$ and, thus, cannot be Fr\'echet-differentiable.

We remark also that scaling properties of the problem play a crucial role, but, in our case, seem to be difficult to apply. Indeed, if $\alpha\neq\beta$, then the nonlinear terms 
\begin{eqnarray*}
	\int_{\R^N}\big(I_\alpha\ast F(u(\lambda\cdot))\big)F(u(\lambda\cdot))\,dx&=&\lambda^{-(N+\alpha)}\int_{\R^N}\big(I_\beta\ast F(u)\big)F(u)\,dx\\
	\int_{\R^N}\big(I_\beta\ast G(u(\lambda\cdot))\big)G(u(\lambda\cdot))\,dx&=&\lambda^{-(N+\beta)}\int_{\R^N}\big(I_\beta\ast G(u)\big)G(u)\,dx
\end{eqnarray*}
have different scaling coefficients and, in particular, one cannot employ Lagrange multipliers as in \cite{BL1},  rescaling as in \cite{StruweMA}, or Pohozaev constraint approach as in \cite{MederskiBN,MederskiBNZero}.

Moreover, to recover the lack of compactness due to the fact that we are working in the whole $\RN$, we start using the invariance of the functional $\cI$ with respect to the orthogonal group action $\mathcal{O}(N)$. Hence we may restrict our considerations to the subspace of radial function $\Dr$, however $\cI|_{\Dr}$ still preserves the above difficulties and may be infinite. 

In this setting, our main result reads as follows.

\begin{Th}\label{ThMain}
	Assume that \eqref{h1}--\eqref{xi1} hold. Then, there is a nontrivial and radial solution $u\in\D$ to \eqref{eq} such that $\big(I_\beta\ast G(u)\big)G(u)\in L^1(\R^N)$.
\end{Th}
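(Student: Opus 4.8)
The plan is to construct the solution by a mountain–pass scheme applied not to $\cI$ itself (which may be $+\infty$ on a dense set) but to a family of truncated/penalized functionals, and then pass to the limit. First I would work on the radial subspace $\Dr$ and regularize the troublesome repulsive term: for $\eps>0$ define $G_\eps(s):=G(s)/(1+\eps G(s))$ or, more robustly, cut $g$ off at level $1/\eps$, so that the modified functional
\[
\cI_\eps(u)=\irn|\n u|^2\,dx-\irn\big(I_\a\ast F(u)\big)F(u)\,dx+\irn\big(I_\b\ast G_\eps(u)\big)G_\eps(u)\,dx
\]
is well defined and $\cC^1$ on $\Dr$ (the attractive term is already controlled by \eqref{h1}, \eqref{ass0} via Hardy--Littlewood--Sobolev, and the truncated repulsive term is bounded and has compact-range convolution, hence finite). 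The key structural point is that the repulsive term is nonnegative, so $\cI_\eps\le\cI_{\eps'}$ for $\eps\le\eps'$ and all of them dominate the "attractive-only" functional; this monotonicity is what will let me control the limit.

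Next I would verify the mountain–pass geometry for each $\cI_\eps$, uniformly in $\eps$. The origin is a strict local minimum: by \eqref{ass0} and \eqref{h1} the attractive part is $o(\|\n u\|_2^2)$ near $0$ while the repulsive part is nonnegative, so $\cI_\eps(u)\ge\frac12\|\n u\|_2^2$ on a small sphere. For the descent direction I would use assumption \eqref{xi1}: choosing a suitable radial test function concentrated where $F(s_0)\neq0$ and rescaling $u(\cdot/\lambda)$, the attractive term scales like $\lambda^{N+\a}$ and the repulsive (truncated) term at most like $\lambda^{N+\b}$ with $\b\le\a$, so for large $\lambda$ one gets $\cI_\eps<0$; here the extra condition $F(s_0)>G(s_0)$ in the borderline case $\a=\b$ is exactly what guarantees the leading coefficient is negative before truncation interferes. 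This yields mountain–pass levels $c_\eps$ that are positive and bounded above by a constant $c_\ast$ independent of $\eps$ (by the monotonicity $\cI_\eps\le\cI_{\eps'}$ one even has $c_\eps$ monotone). Applying the symmetric/radial mountain–pass theorem on $\Dr$ (or a deformation argument, since $\Dr\hookrightarrow L^q$ compactly for the relevant subcritical radial embeddings away from the endpoints — here one must be a little careful because the critical exponent $2^\ast$ does appear, so I would extract a bounded Palais--Smale sequence and argue directly) produces $u_\eps\in\Dr$ with $\cI_\eps(u_\eps)=c_\eps$ and $\cI_\eps'(u_\eps)=0$, solving the $\eps$-regularized equation.

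The last and hardest part is the passage $\eps\to0$. From $\cI_\eps(u_\eps)=c_\eps\le c_\ast$, the equation $\cI_\eps'(u_\eps)=0$, and a Pohozaev-type identity for the regularized problem (which holds because radial finite-energy solutions of such nonlocal elliptic equations satisfy it), I would derive a uniform bound on $\|\n u_\eps\|_2$ and, crucially, a uniform bound on $\irn\big(I_\b\ast G_\eps(u_\eps)\big)G_\eps(u_\eps)\,dx$ — this is the estimate that will survive the limit and give the integrability claim $\big(I_\b\ast G(u)\big)G(u)\in L^1(\RN)$. Then $u_\eps\weakto u$ in $\Dr$ along a subsequence, $u_\eps\to u$ a.e. (using radial decay estimates / local compactness), $G_\eps(u_\eps)\to G(u)$ a.e., and Fatou's lemma upgrades the uniform $L^1$ bound to $\big(I_\b\ast G(u)\big)G(u)\in L^1$. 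Passing to the limit in the weak formulation requires (i) strong convergence of $f(u_\eps)$-type terms in the attractive part, which follows from the subcritical-at-$0$-and-$\infty$ condition \eqref{ass0} plus a Brezis--Lieb / concentration-compactness argument to rule out vanishing and dichotomy (the mountain–pass level being strictly between $0$ and the "critical" threshold keeps vanishing from happening, using $F(s_0)\neq0$ to place mass), and (ii) control of the repulsive term via the uniform $L^1$ bound together with a truncation argument showing $\big(I_\b\ast G_\eps(u_\eps)\big)g_\eps(u_\eps)\psi\to\big(I_\b\ast G(u)\big)g(u)\psi$ for test functions $\psi$ (splitting into the region where $u$ is bounded, where truncation is eventually inactive, and a small remainder controlled by the $L^1$ bound). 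I expect the main obstacle to be precisely this last step: reconciling the loss of compactness at the Sobolev-critical exponent with the non-$\cC^1$ repulsive term, i.e. proving that the limit $u$ is \emph{nontrivial} and that no energy escapes either to infinity or into concentration; assumption \eqref{xi1}, the scaling dichotomy $\b\le\a$, and the uniform energy bound $c_\ast$ are the three ingredients that must be combined carefully to close this.
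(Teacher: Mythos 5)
Your overall architecture (truncate the repulsive term, run a mountain--pass argument with levels uniformly bounded via monotonicity, then pass to the limit using Fatou for the $L^1$ claim) is the same as the paper's, but your choice of regularization has a genuine gap that defeats the whole purpose of the truncation. You cut $G$ off at large values of $s$ (via $G_\eps(s)=G(s)/(1+\eps G(s))$ or by capping $g$ at $1/\eps$), claiming the truncated term is then finite. It is not: the obstruction to $\int_{\R^N}\big(I_\beta\ast G(u)\big)G(u)\,dx<+\infty$ is not the size of $G(u)$ where $|u|$ is large, but the slow decay of $G(u)\sim |u|^p$ at spatial infinity when $p<\frac{N+\beta}{N-2}$ and $u$ is only in $L^{2^*}(\R^N)$ (one has $|u|^p\in L^{2^*/p}$, which misses the Hardy--Littlewood--Sobolev pairing $\frac{1}{r}+\frac{1}{t}=1+\frac{\beta}{N}$). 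Since $\{|u|>\eps^{-1}\}$ has finite measure, your $G_\eps(u)$ coincides with $G(u)$ outside a set of finite measure, so $\cI_\eps$ can still be $+\infty$ on a dense subset and is not $\cC^1$. This is exactly why the paper truncates in the $x$-variable instead, replacing $\mathcal{G}$ by $\mathcal{G}_n(u)=\int_{\R^N}\varphi_n(x)\big(I_\beta\ast G(u)\big)G(u)\,dx$ with $\varphi_n$ compactly supported; on $B_{2n}$ the local integrability from \eqref{Gu}--\eqref{Gu2} suffices. (A truncation of $G$ near $s=0$ would also work, but that is the opposite end from the one you cut.)

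A second, lesser gap: you assert that each regularized functional has an actual critical point $u_\eps$ at the mountain--pass level. Without an Ambrosetti--Rabinowitz condition there is no reason a Palais--Smale sequence for $\cI_\eps$ is bounded, and proving compactness for each fixed $\eps$ is essentially as hard as the final limit. The paper sidesteps this by never solving the truncated problems: it applies the minimax theorem to the augmented functionals $\cJ_n(\sigma,u)=\cI_n(u(e^{-\sigma}\cdot))$ on $\R\times X$, which produces, via a diagonal argument, a single sequence $(\sigma_n,u_n)$ satisfying an approximate Pohozaev-type identity $\partial_\sigma\cJ_n(\sigma_n,u_n)=o_n(1)$; combined with $\alpha\ge\beta$ and the sign of $\nabla\varphi_n(x)\cdot x\le 0$ this yields boundedness directly. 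Your appeal to ``a Pohozaev identity for the regularized problem'' points in the right direction but presupposes the exact critical points you have not constructed. Your limiting step (Fatou for the $L^1$ bound, Brezis--Lieb-type splitting of the attractive term, testing against cut-offs of $u_0$) is consistent in outline with the paper's, and the nontriviality would indeed follow once strong convergence at the positive level $\bar c$ is established, but the two gaps above must be repaired first.
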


Let us describe our variational approach.\\
We observe that 
\begin{equation}\label{fcorsivo}
\mathcal{F}(u):=\int_{\R^N} \big(I_\alpha\ast F(u)\big)F(u)\,dx
\end{equation}
is well-defined on $\D$, however $\cI$ may be infinite. Therefore we replace \begin{equation}\label{defG}
\mathcal{G}(u):=\int_{\R^N} \big(I_\beta * G(u)\big) G(u)\, dx
\end{equation}
with
\begin{equation}\label{defGn}
\mathcal{G}_n(u):=\int_{\R^N} \varphi_n (x)\big(I_\beta * G(u)\big) G(u)\, dx,
\end{equation}
where $\{\vp_n\}_{n\geq 1}$ is a sequence of $\cC_0^{\infty}(\R^N)$ radial functions, decreasing with respect to the radius,  such that, for every $n\geq 1$, $\vp_n(x)=1$ for $x\in B_n$, $\vp_n(x)=0$ 
for $x\in \R^N\setminus B_{2n}$, $0\le \vp_n(x)\leq 1,|x||\nabla\vp_n(x)|\leq c$, 
and $\vp_n(x)\leq \vp_{n+1}(x)$ for $n\geq 1$ and $x\in\R^N$ ($B_n$ stands for the ball of radius $n$ centred at $0$). 
Then $\mathcal{G}_n$ is well-defined on $\D$ and 
\begin{equation}\label{defIn}
\cI_n(u):=\int_{\R^N}|\nabla u|^2\, dx-\mathcal{F}(u)+\mathcal{G}_n(u)
\end{equation}
is of class $\cC^1$. 
\\
The functional $\cI_n$ does not satisfy any variant of Ambrosetti-Rabinowitz condition \cite{AR}, hence it is difficult to find a bounded Palais-Smale sequence on a positive level. Inspired by \cite{HIT,jj} we apply the variational method in \cite[Theorem 2.8]{Willem} to the functional
$$\cJ_n:=(\sigma,u)\in \R\times \Dr\mapsto \cI_n(u(e^{\sigma}\cdot))\in \R.$$ We require a new nonlocal variant of the Brezis-Lieb Lemma for a general nonlinearity, see Lemma \ref{le:BL}, and further compactness properties of $\mathcal{F}(u)$ on $\Dr$ demonstrated in Section \ref{se2}. Then, letting $n\to+\infty$, 
the  careful analysis of the Mountain Pass levels provides a nontrivial radial solution to \eqref{eq}. This approach provides also an alternative proof of the existence of a radial solution in the local case considered in \cite{Struwe,StruweMA}. 
We would like to point out that, contrary to \cite{BL1,Struwe,StruweMA}, we no longer use  the uniform decay at infinity of radial functions from $\Dr$ (see \cite[Radial Lemma A.III]{BL1}) and the compactness lemma due to Strauss \cite[Lemma A.I]{BL1}.

Therefore more can be said in higher dimensions.
Let $N\geq 4$, $N\neq 5$ and similarly as Bartsch and Willem in \cite{BartschWillem} (cf. \cite{MederskiBN,MederskiBNZero,JeanjeanLu}), let us fix $\tau\in\cO(N)$ such that $\tau(x_1,x_2,x_3)=(x_2,x_1,x_3)$ for $x_1,x_2\in\R^M$ and $x_3\in\R^{N-2M}$, where $x=(x_1,x_2,x_3)\in\R^N=\R^M\times\R^M\times \R^{N-2M}$ and $2\leq M\leq N/2$, with $N-2M\neq 1$.
We define
\begin{equation*}
X_{\tau}:=\big\{u\in \D: u(x)=-u(\tau x)\;\hbox{ for all }x\in\R^N\big\}.
\end{equation*}
Clearly, if $u\in X_\tau$ is radial, 
then $u=0$. Hence $X_\tau$ does not contain nontrivial radial functions. Then let us consider $\cO:=\cO(M)\times \cO(M)\times\cO(N-2M)\subset \cO(N)$ acting isometrically on $\D$ with the subspace of invariant function denoted by $\Dnr$.
Moreover our functionals are invariant under this action whenever $f$ and $g$ are odd or even.

Our result, in this setting, is
\begin{Th}\label{ThMain2}
	Assume that \eqref{h1}--\eqref{xi1} hold, $f$ and $g$ are odd, $N\geq 4$ and $N\neq 5$. Then, there is a nontrivial and nonradial solution $u\in \Dnr\cap X_\tau$ to \eqref{eq} such that  $\big(I_\beta\ast G(u)\big)G(u)\in L^1(\R^N)$.
\end{Th}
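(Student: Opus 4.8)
The plan is to repeat, in the equivariant setting associated to the subgroup $\cO=\cO(M)\times\cO(M)\times\cO(N-2M)$ and the anti-invariance $X_\tau$, the whole machinery developed for Theorem \ref{ThMain}. Since $f$ and $g$ are odd, $F$ and $G$ are even, so the functionals $\cF$, $\cG_n$, $\cI_n$ and $\cI$ are invariant under the action of $\cO$ on $\D$ by $(h\cdot u)(x)=u(h^{-1}x)$, and they are also invariant under the action $u\mapsto u(\tau\,\cdot)$ composed with the sign change, hence well-defined on the closed subspace $E:=\Dnr\cap X_\tau$. By the principle of symmetric criticality, critical points of $\cI_n|_E$ (resp. $\cI|_E$) are genuine critical points, i.e. weak solutions of \eqref{eq}. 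One first checks that $E$ is infinite-dimensional and nontrivial for the stated range of $M$ and $N$ (this is exactly the Bartsch--Willem construction in \cite{BartschWillem}; here $N\ge4$, $N\ne5$ is precisely what guarantees $2\le M\le N/2$ with $N-2M\ne1$ is possible). Note also that every nonzero $u\in E$ is nonradial, since a radial function in $X_\tau$ vanishes, so any nontrivial solution produced in $E$ automatically answers the nonradial question.

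Next I would reproduce the scaled functional argument on $E$: set $\cJ_n:(\sigma,u)\in\R\times E\mapsto \cI_n(u(e^\sigma\cdot))$ and apply \cite[Theorem 2.8]{Willem} to obtain, for each $n$, a bounded Palais--Smale sequence for $\cI_n|_E$ at a mountain-pass level $c_n$, together with the Pohozaev-type identity coming from the $\sigma$-derivative. The key point is that all the analytic ingredients used in the radial case survive verbatim on $E$: the nonlocal Brezis--Lieb lemma (Lemma \ref{le:BL}) is a statement about sequences in $\D$ and does not see the symmetry; the compactness of $u\mapsto\cF(u)$ proved in Section \ref{se2} must be upgraded from $\Dr$ to $E$, but this is standard — the embedding $E\hookrightarrow L^q(\R^N)$ is compact for the relevant subcritical exponents by the Bartsch--Willem compactness theorem (the splitting of $\R^N$ into two copies of $\R^M$ kills the concentration at infinity just as radial symmetry does), and the same truncation/cut-off estimates with $\vp_n$ (which are $\cO$-invariant since they are radial) go through. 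One then obtains a nontrivial critical point $u_n\in E$ of $\cI_n|_E$ with $\cI_n(u_n)=c_n$, uniformly bounded in $\D$, with $\big(I_\beta\ast G(u_n)\big)G(u_n)\in L^1(\R^N)$ and uniformly bounded $L^1$-norm, exactly as in the proof of Theorem \ref{ThMain}.

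Finally, letting $n\to\infty$: the uniform bound in $\D\cap E$ gives a weakly convergent subsequence $u_n\weakto u\in E$; by the compactness of $\cF$ on $E$ and the Brezis--Lieb lemma one passes to the limit in the equation and in the level, obtaining a weak solution $u\in E$ of \eqref{eq} with $\big(I_\beta\ast G(u)\big)G(u)\in L^1(\R^N)$ (using Fatou for the $\cG$ term). Nontriviality of $u$ follows from the quantitative lower bound on $c_n$ — here is where assumption \eqref{xi1} enters, to guarantee that the mountain-pass levels are bounded below by a positive constant independent of $n$, via a test function built from $s_0$ (when $\alpha=\beta$ one needs $F(s_0)>G(s_0)$ to make the nonlinear part of $\cI_n$ positive somewhere along a suitable path in $E$).

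I expect the main obstacle to be the compactness step: verifying that the delicate estimates from Section \ref{se2} that give compactness of $\cF$ on $\Dr$ (and the control of the truncated term $\cG_n$ and its gradient) carry over to $E=\Dnr\cap X_\tau$. Concretely, one must re-examine any place where radiality was used to localize mass — the point is that the $\cO$-action here is "coercive at infinity" in the Bartsch--Willem sense, so the relevant compact embeddings and the splitting lemmas hold, but the bookkeeping with the anti-symmetry constraint $X_\tau$ (which is only preserved because $f,g$ are odd) and with the cut-offs $\vp_n$ needs to be done carefully. Once compactness on $E$ is established, the rest is a near-verbatim transcription of the proof of Theorem \ref{ThMain}.
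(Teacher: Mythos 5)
Your proposal follows essentially the same route as the paper: truncation of $\mathcal{G}$ by the cut-offs $\varphi_n$, the scaled functional $\mathcal{J}_n$ together with \cite[Theorem 2.8]{Willem} to produce a bounded Palais--Smale-type sequence, symmetry-induced compactness on $\Dnr\cap X_\tau$, and Fatou plus the nonlocal Brezis--Lieb lemma to pass to the limit; the paper merely works with a single diagonal almost-critical sequence $(\sigma_n,u_n)$ for $\mathcal{J}_n$ rather than first producing exact critical points of each $\mathcal{I}_n$, and builds the mountain-pass test function in $X_\tau$ explicitly as $\eta(|x_1|-|x_2|)\psi(x)$. The one point to phrase more carefully is the compactness of $\mathcal{F}$: since $F$ has critical growth $|s|^{\frac{N+\alpha}{N-2}}$ and one works in $\D$, there is no compact embedding into $L^{2^*}$, and the paper instead combines the vanishing conditions \eqref{ass0} with a Lions-type lemma for $\cO'$-invariant functions (\cite[Lemma A.1]{MederskiBNZero}), exactly as in the radial case.
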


Observe that in Theorem \ref{ThMain} and Theorem \ref{ThMain2} we can take $G(s)=s$ and $\beta=0$ and we obtain solutions in $H^1(\R^N)$ solving the Choquard problem \eqref{eqVM}. 
In fact, dealing with the operator $-\Delta u+ u$, more general assumptions imposed on $F$ can be considered, which fully cover situation in \cite{MVTrans}. 

Actually, our argument can be, quite easily, adapted to the following problem
\begin{equation}\label{eq2}
-\Delta u + \omega u= \big(I_\alpha\ast F(u)\big)f(u)-\big(I_\beta\ast G(u)\big)g(u)
\qquad \hbox{ in }\R^N,
\end{equation}
where $\omega>0 $, assuming that
\begin{enumerate}[label=($H_\arabic*'$),ref=$H_\arabic*'$]
	\item \label{h1'} there is a constant $C>0$ and  $p\in \left(\frac{2\b}{N-2}, \frac{N+\b}{N-2}\right]$ such that $|sf(s)|\leq C(|s|^{\frac{N+\alpha}{N}}+|s|^{\frac{N+\alpha}{N-2}})$ and $ 0\leq g(s)s\leq C\big(|s|^{p}+|s|^{\frac{N+\beta}{N-2}}\big)$ for $s\in\R$;
	\item \label{ass0'} $\displaystyle \lim_{s\to 0}\frac{F(s)}{|s|^\frac{N+\alpha}{N}}=\displaystyle \lim_{|s|\to +\infty}\frac{F(s)}{|s|^\frac{N+\alpha}{N-2}}= 0$;
	\item \label{xi1'} there is $s_0>0$ such that $F(s_0)\neq 0$; we assume also $F(s_0)>G(s_0)$, if $\alpha=\beta>0$, and $F^2(s_0)>G^2(s_0)+\omega s_0^2$, if $\alpha=\beta=0$.
\end{enumerate}
Observe that the energy functional associated with \eqref{eq2} is given by $$\mathcal{K}_\omega(u):=\cI(u)+ \omega \int_{\R^N}|u|^2\, dx,\quad u\in H^1(\R^N),$$
and may be also infinite due to the possible nonintegrable term $\big(I_\beta\ast G(u)\big)G(u)$.

Our results for equation \eqref{eq2} read as follows.

\begin{Th}\label{ThMain3}
Assume that  \eqref{h1'}--\eqref{xi1'} hold. Then, there is a nontrivial and radial solution $u$ to \eqref{eq2} in $H^1(\R^N)$ such that  $\big(I_\beta\ast G(u)\big)G(u)\in L^1(\R^N)$. Moreover, if  $f$ and $g$ are odd, $N\geq 4$ and $N\neq 5$, there is also a nontrivial and nonradial solution $v$ to \eqref{eq2} in $H^1(\R^N)\cap X_\tau$ such that  $\big(I_\beta\ast G(v)\big)G(v)\in L^1(\R^N)$.
\end{Th}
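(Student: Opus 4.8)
\begin{altproof}{Theorem \ref{ThMain3}}
The plan is to repeat, almost verbatim, the scheme developed for Theorems \ref{ThMain} and \ref{ThMain2}, now working in $H^1(\R^N)$ (respectively in the $\cO$-invariant subspace of $H^1(\R^N)$ intersected with $X_\tau$) instead of $\D$, and with $\mathcal{K}_\omega$ in place of $\cI$. First I would record that, by \eqref{h1'}, integrating the growth bound on $f$ gives $|F(s)|\le C(|s|^{\frac{N+\alpha}{N}}+|s|^{\frac{N+\alpha}{N-2}})$, whence $|F(u)|^{\frac{2N}{N+\alpha}}\le C(|u|^2+|u|^{\frac{2N}{N-2}})$ and $F(u)\in L^{\frac{2N}{N+\alpha}}(\R^N)$ for every $u\in H^1(\R^N)$; by the Hardy--Littlewood--Sobolev inequality $\mathcal{F}$ is then well defined and of class $\cC^1$ on $H^1(\R^N)$, exactly as it was on $\D$. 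As in \eqref{defGn}--\eqref{defIn} one replaces $\mathcal{G}$ by the truncations $\mathcal{G}_n$ and sets
$$\mathcal{K}_{\omega,n}(u):=\int_{\R^N}|\nabla u|^2\,dx+\omega\int_{\R^N}|u|^2\,dx-\mathcal{F}(u)+\mathcal{G}_n(u),$$
which is of class $\cC^1$ on $H^1(\R^N)$.

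Next I would run the min-max argument for
$$\cJ_{\omega,n}:(\sigma,u)\in\R\times H^1_{\mathrm{rad}}(\R^N)\longmapsto \mathcal{K}_{\omega,n}\big(u(e^\sigma\cdot)\big)\in\R,$$
applying \cite[Theorem 2.8]{Willem} exactly as for $\cJ_n$. The mountain pass geometry must be checked uniformly in $n$: near the origin it follows from \eqref{h1'}, \eqref{ass0'} and the Sobolev and Hardy--Littlewood--Sobolev inequalities (the extra term $\omega\|u\|_2^2$ only helps), while the existence of a point at negative level uses \eqref{xi1'}: along the family $\sigma\mapsto u(e^\sigma\cdot)$ with $\sigma\to-\infty$ one compares the decay rates of $\mathcal{F}$, $\mathcal{G}_n$ and $\omega\|\cdot\|_2^2$, which explains the trichotomy there — $F(s_0)\neq0$ suffices when $\beta<\alpha$, one needs $F(s_0)>G(s_0)$ when $\alpha=\beta>0$, and $F^2(s_0)>G^2(s_0)+\omega s_0^2$ when $\alpha=\beta=0$ (choosing there a compactly supported $u$ with $\mathcal{G}(u)<\infty$ so that the bound $\mathcal{G}_n\le\mathcal{G}$ is uniform in $n$). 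Theorem 2.8 of \cite{Willem} then produces, for each $n$, a Palais--Smale sequence $(\sigma_j,u_j)_j$ for $\cJ_{\omega,n}$ at a level $c_{\omega,n}\in(0,+\infty)$; rescaling $v_j:=u_j(e^{\sigma_j}\cdot)$ one obtains a sequence in $H^1_{\mathrm{rad}}(\R^N)$ with $\mathcal{K}_{\omega,n}(v_j)\to c_{\omega,n}$, $\mathcal{K}_{\omega,n}'(v_j)\to0$, and an additional Pohozaev-type relation coming from $\partial_\sigma\cJ_{\omega,n}(\sigma_j,u_j)\to0$.

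Combining the Palais--Smale condition with this Pohozaev relation bounds $(v_j)_j$ in $H^1(\R^N)$ without any Ambrosetti--Rabinowitz hypothesis; the compact embedding $H^1_{\mathrm{rad}}(\R^N)\hookrightarrow L^q(\R^N)$, $2<q<2^*$, together with the compactness properties of $\mathcal{F}$ established in Section \ref{se2}, lets one pass to a nontrivial weak limit $u_n$ solving the truncated equation $\mathcal{K}_{\omega,n}'(u_n)=0$, with $c_{\omega,n}$ bounded away from $0$ and from above uniformly in $n$. Finally, letting $n\to+\infty$, I would show that $(u_n)_n$ is bounded in $H^1(\R^N)$, extract a weak limit $u$, and --- invoking the nonlocal Brezis--Lieb Lemma \ref{le:BL} and again the compactness of $\mathcal{F}$ --- prove that $u$ is a nontrivial weak solution of \eqref{eq2}; the monotonicity $\vp_n\le\vp_{n+1}$ and Fatou's lemma then give $\big(I_\beta\ast G(u)\big)G(u)\in L^1(\R^N)$. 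For the nonradial statement one repeats the whole argument with $H^1_{\mathrm{rad}}(\R^N)$ replaced by the $\cO$-invariant subspace of $H^1(\R^N)$ intersected with $X_\tau$: oddness of $f$ and $g$ makes $\mathcal{K}_{\omega,n}$ invariant under the action, the Bartsch--Willem type compact embedding \cite{BartschWillem} (available because $N\ge4$, $N\neq5$, so that $2\le M\le N/2$ with $N-2M\neq1$ can be chosen) replaces the radial compactness, and since $X_\tau$ contains no nontrivial radial function the solution $v$ so produced is genuinely nonradial. The only delicate point --- exactly as in Theorems \ref{ThMain}--\ref{ThMain2} --- is keeping the levels $c_{\omega,n}$ trapped in a fixed compact subinterval of $(0,+\infty)$ so that the double limit $j\to\infty$, then $n\to\infty$, does not degenerate; the coercive term $\omega\|u\|_2^2$ does not obstruct this and in fact slightly simplifies the a priori estimates.
\end{altproof}
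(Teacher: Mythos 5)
Your proposal is correct and follows essentially the same route as the paper, whose own proof of Theorem \ref{ThMain3} consists precisely of the remark that one repeats the arguments of Theorems \ref{ThMain} and \ref{ThMain2} in $H^1(\R^N)$ with the truncations $\mathcal{G}_n$, the rescaled functionals and \cite[Theorem 2.8]{Willem}, the only substantive comment being the scaling computation $\mathcal{K}_\omega(u_\lambda)=\lambda^{N-2}\|\nabla u\|_2^2+\omega\lambda^{N}\|u\|_2^2-\lambda^{N+\alpha}\big(\mathcal{F}(u)-\mathcal{G}(u)\big)$ for $\alpha=\beta$, which explains the trichotomy in \eqref{xi1'} and which you reproduce accurately. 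The only (immaterial) organizational difference is that you first solve each truncated problem and then let $n\to+\infty$, whereas the paper extracts a single diagonal Palais--Smale sequence $(\sigma_n,u_n)$ as in Proposition \ref{PropPS}.
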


In particular, if 
\begin{equation}\label{eq:CubicQuintic}
		F(s):=\frac{1}{\sqrt{q}} |s|^{q} \hbox{ with } 1<q<\frac{N+\alpha}{N-2},\hbox{ and }G(s):=\sqrt{\frac{N-2}{N+\beta}} |s|^{\frac{N+\beta}{N-2}},
\end{equation}
then $F$ and $G$
satisfy  \eqref{h1'}--\eqref{xi1'}  if and only if $\omega\in (0,\omega_0)$, where
	\begin{equation*}
\omega_0:=
	\begin{cases}
\frac{2^*-2q}{2^*(q-1)}\Big(\frac{N(q-1)}{2q}\Big)^{\frac{2^*-2}{2^*-2q}}
&\hbox{ if }\alpha=\beta=0,\\
+\infty &\hbox{ if }\alpha>0.
\end{cases}
\end{equation*}
Then, finally, we obtain the following corollary.
\begin{Cor}\label{th:cubicquintic}
	Suppose that $F$ and $G$ are given by \eqref{eq:CubicQuintic}.
	\begin{enumerate} [label=(\alph*),ref=\alph*]
		\item \label{a14} For any $\omega\in (0,\omega_0)$ there is a radially symmetric symmetric solution in $H^1(\R^N)$ and a nonradial solution in  $H^1(\R^N)\cap X_\tau$ to \eqref{eq2}.
		\item \label{b14} If $\omega\notin (0,\omega_0)$, then \eqref{eq2} has only trivial finite energy solution.
	\end{enumerate}
\end{Cor}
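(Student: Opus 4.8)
The plan is to read part~(a) off Theorem~\ref{ThMain3} after a routine check of the hypotheses, and to prove part~(b) from the Pohozaev identity together with a single pointwise estimate.

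For part~(a) I would first verify that the explicit $F,G$ of \eqref{eq:CubicQuintic} satisfy \eqref{h1'}--\eqref{xi1'} exactly when $\omega\in(0,\omega_0)$. Since $sf(s)=qF(s)$, $sg(s)=\tfrac{N+\b}{N-2}G(s)$, and $G$ carries the critical exponent $\tfrac{N+\b}{N-2}$, conditions \eqref{h1'} and \eqref{ass0'} are immediate from $1<q<\tfrac{N+\a}{N-2}$ (and $q>\tfrac{N+\a}{N}$ when $\a>0$) and do not involve $\omega$. The parameter $\omega$ enters only through \eqref{xi1'}: if $\a>0$ it holds for every $\omega$ (take $s_0>0$ small; in the case $\a=\b$ use $F(s_0)/G(s_0)\to+\infty$ as $s_0\to0^+$, because $q<\tfrac{N+\b}{N-2}$), which is exactly why $\omega_0=+\infty$; if $\a=\b=0$ it amounts to the existence of $s_0>0$ with $\tfrac1q s_0^{2q-2}-\tfrac{N-2}{N}s_0^{2^*-2}>\omega$, i.e.\ $\omega<\sup_{t>0}\big(\tfrac1q t^{2q-2}-\tfrac{N-2}{N}t^{2^*-2}\big)$, and optimising (the unique positive critical point $t_*$ satisfies $t_*^{2^*-2q}=\tfrac{N(q-1)}{2q}$) shows this supremum equals $\omega_0$. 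With this equivalence, part~(a) is precisely Theorem~\ref{ThMain3}, its nonradial half applying since $f,g$ are odd here.

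For part~(b), let $u$ be a finite energy solution of \eqref{eq2}, so $u\in H^1(\RN)$ and $(I_\b\ast G(u))G(u)\in L^1(\RN)$. By elliptic regularity $u$ is bounded, continuous and vanishes at infinity, and therefore satisfies the Pohozaev identity
\begin{equation*}
\omega\irn|u|^2\,dx+\tfrac{N-2}{N}\irn|\n u|^2\,dx=\tfrac{N+\a}{N}\,\mathcal{F}(u)-\tfrac{N+\b}{N}\,\mathcal{G}(u),
\end{equation*}
with $\mathcal{F},\mathcal{G}$ as in \eqref{fcorsivo} and \eqref{defG}. If $\a>0$ then $\omega_0=+\infty$, so for the admissible range $\omega>0$ the hypothesis $\omega\notin(0,\omega_0)$ is empty and nothing needs proving; hence I may assume $\a=\b=0$ and $\omega\geq\omega_0$. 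In that case $\mathcal{F}(u)=\tfrac1q\irn|u|^{2q}\,dx$, $\mathcal{G}(u)=\tfrac{N-2}{N}\irn|u|^{2^*}\,dx$, so the right-hand side above equals $\irn\big(\tfrac1q|u|^{2q}-\tfrac{N-2}{N}|u|^{2^*}\big)\,dx$. The key step is the pointwise inequality $\tfrac1q s^{2q}-\tfrac{N-2}{N}s^{2^*}\leq\omega_0\,s^2$ for every $s\in\R$, which is exactly the definition of $\omega_0$ (equality only at $s\in\{0,\pm t_*\}$); integrating it and inserting it into the Pohozaev identity yields
\begin{equation*}
(\omega-\omega_0)\irn|u|^2\,dx+\tfrac{N-2}{N}\irn|\n u|^2\,dx\leq0 .
\end{equation*}
Since $\omega\geq\omega_0$ both summands on the left are nonnegative, hence both vanish, so $\irn|\n u|^2\,dx=0$ and $u=0$. (If one also wants equation \eqref{eq}, the case $\omega=0$, with these $F,G$: combine the same Pohozaev identity with the Nehari identity $\irn|\n u|^2\,dx=q\,\mathcal{F}(u)-\tfrac{N+\b}{N-2}\,\mathcal{G}(u)$ to eliminate $\mathcal{F}(u)$; since $q<\tfrac{N+\a}{N-2}$ and $\mathcal{G}(u)\geq0$, this again forces $\irn|\n u|^2\,dx=0$.)

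I expect the only genuine difficulty to be the technical underpinning of part~(b): the $L^\infty$-bound, the decay at infinity, and the rigorous Pohozaev identity for finite energy solutions in the presence of the critical Sobolev term $|u|^{2^*-2}u$ (a Brezis--Kato type bootstrap). The one conceptual point worth stressing is that the sharp threshold $\omega_0$ is the supremum of $s\mapsto s^{-2}\big(\tfrac1q s^{2q}-\tfrac{N-2}{N}s^{2^*}\big)$, read off the \emph{potential} appearing in Pohozaev and not off the nonlinearity; a naive maximum-principle argument at an extremal point of $u$ would only produce the larger number $\sup_{s>0}(s^{2q-2}-s^{2^*-2})$ and would not close the estimate for $\omega$ between $\omega_0$ and that number.
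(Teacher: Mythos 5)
Your argument for part (a) and for the local case $\omega\ge\omega_0$ of part (b) is correct and is essentially the paper's: (a) is read off Theorem \ref{ThMain3} after identifying $\omega_0$ as $\sup_{t>0}\big(\tfrac1q t^{2q-2}-\tfrac{N-2}{N}t^{2^*-2}\big)$, and (b) in the local case follows from the Pohozaev identity together with the pointwise bound $F^2(s)-G^2(s)-\omega_0 s^2\le 0$, exactly as in the paper (which cites Berestycki--Lions for this step).

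There is, however, one scope error in part (b). You assert that for $\alpha>0$ (where $\omega_0=+\infty$) ``the hypothesis $\omega\notin(0,\omega_0)$ is empty and nothing needs proving.'' It is not empty: it contains all $\omega\le 0$, and in the nonlocal case this is the \emph{entire} content of statement (\ref{b14}). The paper treats precisely this case, for any $\alpha\ge\beta\ge 0$, by combining the Pohozaev identity with the Nehari identity $\K_\omega'(u)[u]=0$ to eliminate the $\mathcal{F}$-term; since $q<\tfrac{N+\alpha}{N-2}$ and $\mathcal{G}(u)\ge 0$, the resulting identity has all terms of one sign when $\omega\le 0$ and forces $u=0$. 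You in fact supply exactly this computation, but only in a parenthetical restricted to $\omega=0$; it should be promoted to a full case of the proof and stated for all $\omega\le 0$ (the extra terms $\big(\tfrac{N+\alpha}{q}-N\big)\omega\|u\|_2^2$ that appear after elimination are nonnegative for $\omega\le0$ since $q>\tfrac{N+\alpha}{N}$, so the sign argument closes unchanged). With that adjustment your proof coincides with the paper's.
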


Corollary \ref{th:cubicquintic} has been known only in the local case $\alpha=\beta=0$ and the problem appears in nonlinear optics as well as in the the study of Bose–Einstein condensates \cite{Gammal,Mihalache}.
Note that solutions exist only for $0<\omega<\omega_0<+\infty$, see e.g. \cite{MederskiBNZero,Killip,BL1}. In the nonlocal case, for instance if $N=3$, $q=2$ and $\alpha>\beta=0$, 
we solve the nonlocal cubic-quintic problem of the nonlinear optics for all $\omega>0$, where $I_\alpha$ is a nonlocal response function determined by the details of the physical process responsible for the nonlocality \cite{Chenetal}.

Through the paper we use the following notation.
\\
We denote by $\|\cdot\|_k$ the usual norm in $L^k(\R^N)$, for $k\geq 1$, and by  $B_R$ the ball centered in $0$ with radius $R>0$ in $\R^N$.
Recall that $2^*=\frac{2N}{N-2}$. Finally $C$ is a generic positive constant which may vary from line to line.

\section{Functional setting and compactness properties}\label{se2}

We prove our results for $\beta>0$, the most difficult and fully nonlocal situation. Thus, from now on, we assume that $0<\beta\le\alpha<N$ and \eqref{h1}--\eqref{xi1} hold, with $p=1$ when $0<\beta<\frac{N-2}{2}$. The proofs of the paper are simplified when $\beta=0$ or $\alpha=\beta=0$ and we skip these cases.



It is standard to see that the functional $\mathcal{F}:L^{2^*}(\R^N)\to \R$, defined in \eqref{fcorsivo}
is of class $\cC^1$, cf.  \cite{MVTrans}.

In order to control the convergence of $\mathcal{F}$, we need the following nonlocal variant of the Brezis-Lieb Lemma \cite{Brezis} for the general nonlinarity.
Note that nonlocal variants for particular nonlinearities have already appeared in \cite[Lemma 2.2]{Bellazzini}, \cite[Lemma 2.4]{MVJFA}. The proofs of \cite{Bellazzini,MVJFA} seem to be difficult to adapt to the general nonlinear term. We provide an independent proof for any continuous $f$ satisfying \eqref{h1} and  \eqref{ass0}.

\begin{Lem}\label{le:BL}
Let  $u_n\weakto u_0$  in $\D$. Then
		\begin{equation*}
		\begin{split}
		&\lim_{n} \left(\int_{\R^N} \big(I_\a\ast F(u_n\big)f(u_n)u_n\,dx-\int_{\R^N} \big(I_\a\ast F(u_n-u_0)\big)f(u_n)u_n\,dx\right)\\
		&\qquad
		=\int_{\R^N} \big(I_\a\ast F(u_0)\big)f(u_0)u_0\,dx.
		\end{split}
		\end{equation*}
\end{Lem}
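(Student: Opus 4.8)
<br>

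The plan is to split the difference into two pieces by adding and subtracting a mixed term. Write
\[
A_n := \int_{\R^N} \big(I_\a\ast F(u_n)\big)f(u_n)u_n\,dx - \int_{\R^N} \big(I_\a\ast F(u_n-u_0)\big)f(u_n)u_n\,dx,
\]
and interpolate: the natural intermediate quantity is $\int_{\R^N} \big(I_\a\ast [F(u_n)-F(u_n-u_0)]\big)f(u_n)u_n\,dx$, which is exactly $A_n$, and then within that I would further peel off $\int_{\R^N} \big(I_\a\ast [F(u_n)-F(u_n-u_0)]\big)f(u_0)u_0\,dx$ and $\int_{\R^N} \big(I_\a\ast F(u_0)\big)f(u_0)u_0\,dx$. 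So the scheme is: (i) show the pointwise (a.e.) Brezis--Lieb type splitting $F(u_n) - F(u_n - u_0) \to F(u_0)$ and $f(u_n)u_n - f(u_n-u_0)(u_n-u_0) \to f(u_0)u_0$ along a subsequence, using $u_n \to u_0$ a.e. (after passing to a subsequence via Rellich on balls and a diagonal argument, since $\D$ embeds into $L^{2^*}_{\mathrm{loc}}$) together with continuity of $F, f$; (ii) upgrade these a.e. statements to convergence in the relevant Lebesgue/Morrey space needed to pair against the Riesz potential; (iii) use weak convergence of the "tail" pieces against a fixed test function coming from $u_0$.

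More concretely, by \eqref{h1} and \eqref{ass0} we have $|F(s)| \le C|s|^{\frac{N+\alpha}{N-2}}$ and $|f(s)s|\le C|s|^{\frac{N+\alpha}{N-2}}$, so $F(u_n), f(u_n)u_n$ are bounded in $L^{\frac{2N}{N+\alpha}}(\R^N)$ (the natural dual exponent for the Hardy--Littlewood--Sobolev inequality with parameter $\alpha$), since $\frac{2N}{N-2}\cdot\frac{N-2}{N+\alpha} = \frac{2N}{N+\alpha}$. The classical Brezis--Lieb lemma (applied to the nonlinearity $F$, with exponent $\frac{N+\alpha}{N-2}$ and the bound on $|F(s)-F(t)|$ following from $f$'s growth via the mean value theorem combined with the elementary Brezis--Lieb inequality) gives $F(u_n) - F(u_n - u_0) \to F(u_0)$ strongly in $L^{\frac{2N}{N+\alpha}}(\R^N)$; likewise $f(u_n)u_n - f(u_n-u_0)(u_n-u_0) \to f(u_0)u_0$ strongly in $L^{\frac{2N}{N+\alpha}}(\R^N)$. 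By the Hardy--Littlewood--Sobolev inequality, convolution with $I_\alpha$ maps $L^{\frac{2N}{N+\alpha}}(\R^N)$ continuously into $L^{\frac{2N}{N-\alpha}}(\R^N)$, and the pairing $\int (I_\alpha * h)\,k$ is continuous on $L^{\frac{2N}{N+\alpha}} \times L^{\frac{2N}{N+\alpha}}$. Hence, writing $A_n$ as a bilinear pairing and expanding, all cross terms in which at least one slot carries $F(u_n)-F(u_n-u_0) \to F(u_0)$ (strongly) and the other carries the bounded sequence $f(u_n)u_n$ converge, after extracting a further subsequence so that $f(u_n)u_n \weakto f(u_0)u_0$ in $L^{\frac{2N}{N+\alpha}}(\R^N)$ (its a.e. limit identifies the weak limit), to $\int \big(I_\alpha * F(u_0)\big) f(u_0)u_0\,dx$. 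The remaining term is $\int \big(I_\alpha * F(u_n - u_0)\big)\big(f(u_n)u_n - f(u_0)u_0\big)\,dx$ plus corrections; here $I_\alpha * F(u_n-u_0)$ is bounded in $L^{\frac{2N}{N-\alpha}}$, so one needs $f(u_n)u_n - f(u_0)u_0$ to vanish weakly against it — which holds because $F(u_n-u_0)$, hence $I_\alpha*F(u_n-u_0)$, converges weakly to zero (being the Brezis--Lieb remainder paired suitably), a point that has to be argued with a little care.

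I expect the main obstacle to be exactly this last bookkeeping: cleanly organizing the bilinear expansion $A_n = B(F(u_n),f(u_n)u_n) - B(F(u_n-u_0),f(u_n)u_n)$ with $B(h,k)=\int(I_\alpha*h)k$, rewriting it as $B\big(F(u_n)-F(u_n-u_0),\,f(u_n)u_n\big)$, and then showing this converges to $B(F(u_0),f(u_0)u_0)$. The clean way: $B\big(F(u_n)-F(u_n-u_0) - F(u_0),\, f(u_n)u_n\big) \to 0$ because the first slot $\to 0$ strongly in $L^{\frac{2N}{N+\alpha}}$ and the second is bounded there; and $B\big(F(u_0),\, f(u_n)u_n - f(u_0)u_0\big)\to 0$ because $I_\alpha*F(u_0)$ is a fixed element of $L^{\frac{2N}{N-\alpha}}$ and $f(u_n)u_n \weakto f(u_0)u_0$ weakly in $L^{\frac{2N}{N+\alpha}}$ (the weak limit being identified by the a.e. convergence and the uniform $L^{\frac{2N}{N+\alpha}}$ bound, via a standard lemma). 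Summing the two gives the claim. The delicate spots are: justifying the a.e. convergence $u_n\to u_0$ by a diagonal/Rellich argument on $\D$ (no global decay, as the authors emphasize), verifying the $|F(s)-F(t)|$ estimate that feeds the Brezis--Lieb lemma uses only \eqref{h1}–\eqref{ass0}, and being careful that $\frac{2N}{N+\alpha}>1$ so HLS applies (true since $\alpha<N$).
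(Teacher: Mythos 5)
Your argument is correct, but it follows a genuinely different route from the paper's. The paper deliberately avoids any Brezis--Lieb splitting of $F$: it writes the difference as $\int_0^1\irn \big(I_\a\ast(f(u_n)u_n)\big)f(u_n-su_0)u_0\,dx\,ds$ via the fundamental theorem of calculus applied to $\phi_n(s)=\big(I_\a\ast F(u_n-su_0)\big)f(u_n)u_n$ and the symmetry of the Riesz pairing, then proves convergence of the $s$-parametrized pairings by approximating $u_0$ with a test function and estimating four error terms, and finally integrates in $s$ by dominated convergence. You instead prove the splitting head-on: $F(u_n)-F(u_n-u_0)-F(u_0)\to 0$ strongly in $L^{\frac{2N}{N+\a}}(\R^N)$, paired against the bounded sequence $f(u_n)u_n$ via Hardy--Littlewood--Sobolev, plus the weak convergence $f(u_n)u_n\weakto f(u_0)u_0$ in $L^{\frac{2N}{N+\a}}(\R^N)$ tested against the fixed function $I_\a\ast F(u_0)\in L^{\frac{2N}{N-\a}}(\R^N)$. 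The enabling observation, which the authors' remark that the known proofs ``seem difficult to adapt to the general nonlinear term'' somewhat obscures, is that although $f$ is merely continuous, its primitive $F$ is $\cC^1$ with $|F'|=|f|\le C|s|^{\frac{\a+2}{N-2}}$, so the mean value theorem and Young's inequality give $|F(a+b)-F(a)|\le \eps|a|^{\frac{N+\a}{N-2}}+C_\eps|b|^{\frac{N+\a}{N-2}}$, which is precisely the hypothesis of the $L^q$-variant of the Brezis--Lieb lemma with $q=\frac{2N}{N+\a}$. Two caveats. First, what you invoke is not the classical Brezis--Lieb lemma (which yields $L^1$ convergence under different integrability hypotheses) but this $L^q$-variant; it is standard but should be stated and proved (truncate $\max\{|F(u_n)-F(u_n-u_0)-F(u_0)|^q-C\eps^q|u_n-u_0|^{2^*},0\}$ and dominate by $C_\eps|u_0|^{2^*}$), and it uses only \eqref{h1}. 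Second, your parallel claim that $f(u_n)u_n-f(u_n-u_0)(u_n-u_0)\to f(u_0)u_0$ strongly in $L^{\frac{2N}{N+\a}}(\R^N)$ is not justified for merely continuous $f$ (the map $s\mapsto f(s)s$ need not admit the required modulus-of-continuity estimate); fortunately your final two-term decomposition never uses it, so it should simply be deleted. With those adjustments your proof is complete and arguably shorter than the paper's, at the cost of importing the $L^q$ Brezis--Lieb machinery that the authors' homotopy trick was designed to circumvent.
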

\begin{proof}
 We claim that, passing to a subsequence, for any $s\in [0,1]$, 
	\begin{equation}\label{BLeq1}
	\lim_{n}\irn \big(I_\a\ast (f(u_n)u_n)\big)f(u_n-su_0) u_0\, dx = 
	\irn \big(I_\a\ast (f(u_0)u_0)\big)f(u_0-su_0)u_0\, dx.
	\end{equation}
Let $\eps>0$ and $\psi\in \cC^\infty_0(\RN)$ such that $\|u_0-\psi\|_{2^*}<\eps$. We have
\begin{align*}
&\left|\irn \big(I_\a\ast (f(u_n)u_n)\big)f(u_n-su_0) u_0\, dx -
\irn \big(I_\a\ast (f(u_0)u_0)\big)f(u_0-su_0)u_0\, dx\right|
\\
&\qquad\le\underbrace{ \left|\irn \big(I_\a\ast (f(u_n)u_n)\big)f(u_n-su_0)( u_0 -\psi)\, dx\right|}_{(A)}
\\
&\qquad \quad +\underbrace{\left|\irn \big(I_\a\ast (f(u_n)u_n)\big)\big(f(u_n-su_0)-f(u_0-su_0)\big)\psi\, dx\right|}_{(B)}
\\
&\qquad \quad +\underbrace{\left|\irn \Big(\big(I_\a\ast (f(u_n)u_n)\big)- \big(I_\a\ast (f(u_0)u_0)\big)\Big)f(u_0-su_0)\psi\, dx\right|}_{(C)}
\\
&\qquad \quad +\underbrace{\left|
	\irn \big(I_\a\ast (f(u_0)u_0)\big)f(u_0-su_0)(\psi-u_0)\, dx\right|}_{(D)}.
\end{align*} 
Since $\{u_n\}$ is a bounded sequence in $L^{2^*}(\RN)$, we deduce by \eqref{h1} that $\{f(u_n)u_n\}$ is bounded in $L^{\frac{2N}{N+\a}}(\RN)$. Moreover, by the continuity, we deduce that $f(u_n)u_n$ converges to $f(u_0)u_0$ a.e. on $\R^N$ along a subsequence. Therefore $f(u_n)u_n$ tends weakly to $f(u_0)u_0$ in $L^{\frac{2N}{N+\a}}(\RN)$. 
As the Riesz potential defines a linear and continuous map from
$L^{\frac{2N}{N+\a}}(\RN)$ to $L^{\frac{2N}{N-\a}}(\RN)$, we obtain that
$I_\a\ast (f(u_n)u_n)$ tends weakly to $I_\a\ast( f(u_0)u_0)$ in $L^{\frac{2N}{N-\a}}(\RN)$.
Moreover, since $u_n-su_0$ converges to $u_0-su_0$ in $L^q_{\rm loc}(\RN)$, for $1\le q<2^*$, by \eqref{h1} we infer that $f(u_n-su_0)$ converges to $f(u_0-su_0)$ in $L^q_{\rm loc}(\RN)$, for $1\le q<2N/(\a+2)$.\\
Then, by the Hardy–Littlewood–Sobolev inequality and since  $\{f(u_n-su_0)\}$ is bounded in $L^\frac{2N}{\alpha+2}(\R^N)$ we obtain
\[
(A)\le 
C\|f(u_n)u_n\|_{\frac{2N}{N+\a}} \|f(u_n-su_0)\|_{\frac{2N}{\a+2}} \| u_0 -\psi\|_{2^*}
\le C \eps
\]
and analogously, $(D)\le C\eps$.\\
Moreover, denoting by $K:=\supp (\psi)$, we have
\[
(B)
\leq
C \|f(u_n)u_n\|_\frac{2N}{N+\alpha}\|f(u_n-su_0)-f(u_0-su_0)\|_{L^\frac{N(N+2\alpha+2)}{(N+\alpha)(\alpha+2)}(K)}
\|\psi\|_\frac{2N(N+2\alpha+2)}{(N+\a)(N-2)}
=o_n(1).
\]
Finally, also $(C)=o_n(1)$, since $f(u_0-su_0)\psi$ belongs to $ L^{\frac{2N}{N+\a}}(\RN)$, namely the dual space of $L^{\frac{2N}{N-\a}}(\RN)$.\\
Therefore \eqref{BLeq1} is proved.\\
Now, for any $n\in \N$,  we  set $\phi_n(s)=\big(I_\a\ast F(u_n-su_0)\big)f(u_n)u_n$, for  $s\in [0,1]$, and we obtain
\begin{align*}
&\int_{\R^N} \big(I_\a\ast F(u_n\big)f(u_n)u_n\,dx-\int_{\R^N} \big(I_\a\ast F(u_n-u_0)\big)f(u_n)u_n\,dx\\
&\qquad
=\irn \big(\phi_n(0)-\phi_n(1)\big)dx
=-\int_0^1 \left(\irn \phi_n'(s)\,dx\right) ds
\\
&\qquad
=\int_0^1 \left(\irn \big(I_\a\ast (f(u_n-su_0)u_0)\big)f(u_n)u_n \,dx\right) ds
\\
&\qquad
=\int_0^1 \left(\irn \big(I_\a\ast (f(u_n)u_n)\big)f(u_n-su_0)u_0 \,dx\right) ds.
\end{align*}
Hence, by \eqref{BLeq1}, taking into account the Lebesgue Dominated Convergence Theorem
\begin{align*}
&\lim_{n} \left(\int_{\R^N} \big(I_\a\ast F(u_n\big)f(u_n)u_n\,dx-\int_{\R^N} \big(I_\a\ast F(u_n-u_0)\big)f(u_n)u_n\,dx\right)
\\
&\qquad=\lim_{n}\int_0^1 \left(\irn \big(I_\a\ast (f(u_n)u_n) \big)f(u_n-su_0)u_0\,dx\right) ds
\\
&\qquad=\int_0^1 \left(\lim_{n}\irn \big(I_\a\ast (f(u_n)u_n) \big)f(u_n-su_0)u_0\,dx\right) ds
\\
&\qquad=\int_0^1 \left(\irn \big(I_\a\ast (f(u_0)u_0)\big)f(u_0-su_0)u_0\, dx\right) ds
\\
&\qquad =-\int_0^1 \left(\irn \phi_0'(s)\,dx\right) ds
=-\irn \left(\int_0^1 \phi_0'(s)\, ds\right)dx
\\
&\qquad =\irn \big(\phi_0(0)-\phi_0(1)\big)dx
=\int_{\R^N} \big(I_\a\ast F(u_0)\big)f(u_0)u_0\,dx.
\end{align*}
\end{proof}

Now, let $\cO'=\cO(N)$, or $\cO'=\cO=\cO(M)\times \cO(M)\times\cO(N-2M)\subset \cO(N)$ provided that $N\geq 4$ and $N\neq 5$ with $2\leq M\leq N/2$ and $N-2M\neq 1$.
Let $\DO$ be the subspace of $\cO'$-invariant functions. Below we demonstrate the compactness properties in the following lemmas.

\begin{Lem}\label{le:BLnew}
	Let $u_n\weakto u_0$  in $\DO$. Then
\begin{equation*}
	\lim_{n} \int_{\R^N} \big(I_\a\ast F(u_n)\big)f(u_n)u_n\,dx
	=\int_{\R^N} \big(I_\a\ast F(u_0)\big)f(u_0)u_0\,dx.
\end{equation*}
\end{Lem}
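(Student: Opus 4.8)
The plan is to combine Lemma \ref{le:BL} (the nonlocal Brezis–Lieb splitting) with a compactness argument that kills the ``difference'' term $\int_{\R^N}(I_\a\ast F(u_n-u_0))f(u_n)u_n\,dx$. By Lemma \ref{le:BL}, passing to a subsequence we have
$$
\int_{\R^N} \big(I_\a\ast F(u_n)\big)f(u_n)u_n\,dx
=\int_{\R^N} \big(I_\a\ast F(u_n-u_0)\big)f(u_n)u_n\,dx
+\int_{\R^N} \big(I_\a\ast F(u_0)\big)f(u_0)u_0\,dx+o_n(1),
$$
so it suffices to show that $v_n:=u_n-u_0\weakto 0$ in $\DO$ forces $\int_{\R^N}(I_\a\ast F(v_n))f(u_n)u_n\,dx\to 0$. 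Since $\{f(u_n)u_n\}$ is bounded in $L^{2N/(N+\a)}(\R^N)$ (by \eqref{h1} and boundedness of $\{u_n\}$ in $L^{2^*}$), by the Hardy–Littlewood–Sobolev inequality it is enough to prove that $I_\a\ast F(v_n)\to 0$ in $L^{2N/(N-\a)}(\R^N)$, equivalently, again by HLS, that $F(v_n)\to 0$ in $L^{2N/(N+\a)}(\R^N)$.

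So the heart of the matter is: \emph{if $v_n\weakto 0$ in $\DO$, then $F(v_n)\to 0$ strongly in $L^{2N/(N+\a)}(\R^N)$.} This is where the symmetry enters and where I expect the main obstacle to lie. The key input is a Lions-type vanishing/concentration-compactness argument adapted to the group $\cO'$: functions in $\DO$ that are $\cO(N)$-invariant, or invariant under $\cO(M)\times\cO(M)\times\cO(N-2M)$, cannot ``run off to infinity'' along a single orbit without losing mass, so a suitable compact-embedding-at-infinity phenomenon holds. Concretely, I would argue that $v_n\to 0$ in $L^q_{\mathrm{loc}}(\R^N)$ for $1\le q<2^*$ by Rellich, and then control the tails: using \eqref{ass0}, for any $\eps>0$ one has the pointwise bound $|F(v_n)|^{2N/(N+\a)}\le \eps(|v_n|^{2^*}+\,\text{(lower-order near }0\text{)})+C_\eps \mathbf 1_{\{\delta\le|v_n|\le M\}}|v_n|^{2^*}$, the last term being integrable with a factor that vanishes as $v_n\to 0$ in measure on bounded sets. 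The genuinely new point compared with the $H^1$-radial setting is that $\DO$ does not embed compactly into $L^{2^*}$ and radial functions need not decay uniformly at infinity; instead one exploits that for $\cO'$-invariant functions the measure of the superlevel sets $\{|x|>R,\ |v_n(x)|>\delta\}$ is controlled by the $\cO'$-orbit structure, forcing the ``intermediate range'' contribution of $F(v_n)$ at infinity to $0$. I would make this precise either by a direct orbit-counting estimate (as in Bartsch–Willem \cite{BartschWillem} for $\cO(M)\times\cO(M)\times\cO(N-2M)$) or by invoking the relevant compactness lemma for $\DO$ that the authors presumably establish just before this Lemma.

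Once $F(v_n)\to 0$ in $L^{2N/(N+\a)}(\R^N)$ is in hand, the rest is routine: HLS gives
$$
\left|\int_{\R^N}\big(I_\a\ast F(v_n)\big)f(u_n)u_n\,dx\right|
\le C\,\|F(v_n)\|_{2N/(N+\a)}\,\|f(u_n)u_n\|_{2N/(N+\a)}\longrightarrow 0,
$$
and, substituting back, the claimed convergence follows along the subsequence; since the limit is independent of the subsequence, the full sequence converges. The main obstacle, to repeat, is the tail estimate for $F(v_n)$ under the weak $\cO'$-symmetry only — this is precisely the place where the absence of Strauss-type compactness and of uniform radial decay (emphasized in the introduction) must be circumvented by the geometry of the group action.
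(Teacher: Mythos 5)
Your proposal is correct and follows essentially the same route as the paper: reduce via Lemma \ref{le:BL} to the difference term, bound it by $C\|F(u_n-u_0)\|_{2N/(N+\alpha)}$ using Hardy--Littlewood--Sobolev and \eqref{h1}, and then use the $\cO'$-symmetry together with \eqref{ass0} to get $\|F(u_n-u_0)\|_{2N/(N+\alpha)}\to 0$. The paper disposes of this last step exactly as you anticipated, by invoking an existing compactness lemma (\cite[Lemma A.1]{MederskiBNZero}) rather than redoing the orbit-counting argument.
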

\begin{proof}
By Lemma \ref{le:BL}, we conclude if we prove that
	\begin{equation*}
	\lim_{n} \int_{\R^N} \big(I_\a\ast F(u_n-u_0)\big)f(u_n)u_n\,dx
	=0.
	\end{equation*}
Indeed, by the Hardy-Littlewood-Sobolev inequality and \eqref{h1}, we have
	\[
	\irn (I_\alpha*F(u_n-u_0))f(u_n)u_n \, dx
	\leq
	C \| F(u_n-u_0) \|_{\frac{2N}{N+\alpha}}\| f(u_n)u_n \|_{\frac{2N}{N+\alpha}}
	\le	C \| F(u_n-u_0) \|_{\frac{2N}{N+\alpha}}.
	\]
	The fact that $\| F(u_n-u_0) \|_{2N/(N+\alpha)}\to 0$ is a consequence of \eqref{ass0} and \cite[Lemma A.1]{MederskiBNZero}, where the symmetry $\cO'$ plays a crucial r\^ole.
\end{proof}

\begin{Lem}\label{le:BL1}
	Let $u_n\weakto u_0$ in $\D$. Then, for any $\psi\in \cC_0^\infty(\RN)$,
	\begin{equation}\label{cacchio}
	\lim_{n} \int_{\R^N} \big(I_\a\ast F(u_n)\big)f(u_n)\psi\,dx
	=\int_{\R^N} \big(I_\a\ast F(u_0)\big)f(u_0)\psi\,dx.
	\end{equation}	
\end{Lem}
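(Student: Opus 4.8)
The plan is to exploit the weak convergence $u_n\weakto u_0$ in $\D$, which by the Sobolev embedding gives that $\{u_n\}$ is bounded in $L^{2^*}(\RN)$, that $u_n\to u_0$ in $L^q_{\rm loc}(\RN)$ for $1\le q<2^*$, and that (passing to a subsequence) $u_n\to u_0$ a.e.\ on $\RN$. Writing $K:=\supp(\psi)$, I would split the difference of the two integrals in \eqref{cacchio} according to which factor is being varied:
\begin{align*}
&\irn \big(I_\a\ast F(u_n)\big)f(u_n)\psi\,dx-\irn \big(I_\a\ast F(u_0)\big)f(u_0)\psi\,dx\\
&\quad=\underbrace{\irn \big(I_\a\ast F(u_n)\big)\big(f(u_n)-f(u_0)\big)\psi\,dx}_{(I)}
+\underbrace{\irn \big(I_\a\ast \big(F(u_n)-F(u_0)\big)\big)f(u_0)\psi\,dx}_{(II)},
\end{align*}
and show separately that $(I)\to 0$ and $(II)\to 0$.

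For the term $(II)$ I use a weak--strong pairing. By \eqref{ass0} one has $|F(s)|\le C|s|^{\frac{N+\alpha}{N-2}}$, so $\{F(u_n)\}$ is bounded in $L^{\frac{2N}{N+\alpha}}(\RN)$; together with $F(u_n)\to F(u_0)$ a.e.\ (continuity of $F$) this gives $F(u_n)\weakto F(u_0)$ in $L^{\frac{2N}{N+\alpha}}(\RN)$. Since the Riesz potential is linear and continuous from $L^{\frac{2N}{N+\alpha}}(\RN)$ into $L^{\frac{2N}{N-\alpha}}(\RN)$, it follows that $I_\a\ast\big(F(u_n)-F(u_0)\big)\weakto 0$ in $L^{\frac{2N}{N-\alpha}}(\RN)$. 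It then suffices to observe that the fixed pairing factor $f(u_0)\psi$ lies in the dual $L^{\frac{2N}{N+\alpha}}(\RN)$: from \eqref{h1} we get $|f(s)|\le C|s|^{\frac{\alpha+2}{N-2}}$, and since $\frac{\alpha+2}{N-2}\cdot\frac{2N}{N+\alpha}<\frac{2N}{N-2}=2^*$ (because $N\ge 3>2$), the compactly supported function $f(u_0)\psi$ indeed belongs to $L^{\frac{2N}{N+\alpha}}(\RN)$. Hence $(II)\to 0$.

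For the term $(I)$, the Hardy--Littlewood--Sobolev inequality and the bound on $\{F(u_n)\}$ above show that $\{I_\a\ast F(u_n)\}$ is bounded in $L^{\frac{2N}{N-\alpha}}(\RN)$, so by H\"older's inequality it is enough to prove that $\big(f(u_n)-f(u_0)\big)\psi\to 0$ strongly in $L^{\frac{2N}{N+\alpha}}(\RN)$. This is the heart of the matter, and I expect it to be the main obstacle, since we are pairing against a sequence that is only bounded (not strongly convergent) in $L^{\frac{2N}{N-\alpha}}$. The key is to upgrade a.e.\ convergence to norm convergence via uniform integrability on the compact set $K$. Indeed, $|f(u_n)\psi|^{\frac{2N}{N+\alpha}}\le C\,\|\psi\|_\infty^{\frac{2N}{N+\alpha}}\,|u_n|^{q_0}\mathbf 1_K$ with $q_0:=\frac{\alpha+2}{N-2}\cdot\frac{2N}{N+\alpha}<2^*$, and $\{|u_n|^{q_0}\}$ is bounded in $L^{2^*/q_0}(K)$ with $2^*/q_0>1$, hence uniformly integrable on the finite-measure set $K$ by de la Vall\'ee Poussin. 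Since $f$ is continuous, $f(u_n)\psi\to f(u_0)\psi$ a.e., and the Vitali convergence theorem yields the desired strong convergence in $L^{\frac{2N}{N+\alpha}}(\RN)$. Thus $(I)\to 0$, and combining with $(II)\to 0$ proves \eqref{cacchio}. The whole argument hinges on the growth exponent being \emph{strictly} subcritical on the compact support $K$, so that the $L^{2^*}$-boundedness of $\{u_n\}$ turns a.e.\ convergence into norm convergence.
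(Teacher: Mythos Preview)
Your proof is correct and follows essentially the same approach as the paper's: both split the difference into the two terms $(I)$ and $(II)$, handle $(II)$ via the weak convergence $I_\a\ast F(u_n)\weakto I_\a\ast F(u_0)$ in $L^{\frac{2N}{N-\a}}(\RN)$ paired against $f(u_0)\psi\in L^{\frac{2N}{N+\a}}(\RN)$, and handle $(I)$ by combining the boundedness of $\{I_\a\ast F(u_n)\}$ in $L^{\frac{2N}{N-\a}}(\RN)$ with strong local convergence $f(u_n)\to f(u_0)$ on $K=\supp(\psi)$. The only cosmetic difference is that you obtain this strong local convergence via a Vitali/de la Vall\'ee Poussin uniform-integrability argument, whereas the paper invokes the continuity of the Nemytskii operator $u\mapsto f(u)$ from $L^q_{\rm loc}$ into $L^{q'}_{\rm loc}$ (for the subcritical exponents permitted by \eqref{h1}) and uses a three-factor H\"older split that separates $\psi$ into its own Lebesgue norm; both routes exploit the same strict subcriticality $\frac{\alpha+2}{N-2}\cdot\frac{2N}{N+\alpha}<2^*$.
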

\begin{proof}
Arguing as in the proof of Lemma \ref{le:BL} and passing to a subsequence, we have that $f(u_n)\to f(u_0)$ in $L^q_{\rm loc}(\RN)$, for $1\le q<2N/(\a+2)$, 
and 
$\{I_\a\ast F(u_n)\}$ is bounded in $L^{\frac{2N}{N-\a}}(\RN)$
and
tends weakly to $I_\a\ast F(u_0)$ in $L^{\frac{2N}{N-\a}}(\RN)$.   Thus, since
\begin{multline*}
\left|
\int_{\R^N} \big(I_\a\ast F(u_n)\big)f(u_n)\psi\,dx
-\int_{\R^N} \big(I_\a\ast F(u_0)\big)f(u_0)\psi\,dx
\right|
\\
\le \int_{\R^N} \big(I_\a\ast F(u_n)\big)|f(u_n)-f(u_0)||\psi|\,dx
+
\left|\int_{\R^N} \big( I_\a\ast F(u_n)-I_\a\ast F(u_0)\big) f(u_0)\psi\,dx\right|
,
\end{multline*}
using the same arguments as in (B) and (C) in the proof of Lemma \ref{le:BL}, we get
\eqref{cacchio}.
\end{proof}

For what concerns the term with $G$,
at least formally, we define the functional $\mathcal{G}$ as in \eqref{defG}.
However, if in \eqref{h1},  
 $p<\frac{N+\b}{N-2}$, the situation is quite different from $\mathcal{F}$ and $\mathcal{G}$ need not be finite. Indeed, in such a case, let us consider the Banach spaces
\[
L^\mu(\Omega)+L^\nu(\Omega)
:=
\left\{
u\in\mathcal{M}(\Omega): u=u_1+u_2, u_1\in L^\mu(\Omega), u_2\in L^\nu(\Omega)
\right\},
\]
where $1\leq\mu\leq\nu<+\infty$, $\Omega$ is an arbitrary subset of $\R^N$, and $\mathcal{M}(\Omega)$ is the set of the real measurable functions defined on $\Omega$, equipped with the norm
\[
\|u\|_{\mu,\nu}:=\inf_{u=u_1+u_2} (\|u_1\|_{L^\mu(\Omega)}+\|u_2\|_{L^\nu(\Omega)})
\]
(see e.g. \cite[Section 2]{BPR} for more details about these spaces).
\\
Observe that if $u\in \D\subset L^{2^*}(\R^N)$, since $|u|^p\in L^{\frac{2^*}{p}}(\RN)$ and   $|u|^\frac{N+\beta}{N-2}\in L^{\frac{2N}{N+\beta}}(\R^N)$, by \cite[Proposition 2.3]{BPR} and \eqref{h1}, we get
\begin{equation}\label{Gu}
G(u)\in L^{\frac{2N}{N+\beta}}(\R^N)+
L^{\frac{2^*}p}(\R^N). 
\end{equation}
Moreover, since
\[
I_\beta * G(u)\leq C \big(I_\beta *(|u|^p+|u|^\frac{N+\beta}{N-2})\big),
\]
by  \cite[Inequality (9), page 107]{LL} and \cite[Proposition 2.3]{BPR}
we have
\begin{equation}\label{Gu2}
I_\beta * G(u)\in L^{\frac{2N}{N-\beta}}(\R^N)+L^{\frac{2N}{(N-2)p-2\beta}}(\R^N).
\end{equation}
However, this does not seem enough to assure that $\mathcal{G}(u)<+\infty$ for any $u\in \D$,  and so we need a different approach. We replace $
\mathcal{G}(u)$ with
$\mathcal{G}_n(u)$ given by \eqref{defGn} together with the sequence $\{\varphi_n\}$ defined there.

%

We prove the following lemma.
\begin{Lem}
For every $n\in\mathbb{N}$, $\mathcal{G}_n\in \cC^1(\D,\R)$.
\end{Lem}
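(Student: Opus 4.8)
The plan is to show that the truncated functional
$$
\mathcal{G}_n(u)=\irn \vp_n(x)\big(I_\beta * G(u)\big) G(u)\, dx
$$
is finite on all of $\D$, and then that it is continuously Fréchet differentiable. First I would observe that the cut-off $\vp_n$ is supported in $B_{2n}$, so that, writing $\Omega_n:=B_{2n}$, we only ever integrate over a \emph{bounded} domain. On such a bounded domain the embedding $\D\hookrightarrow L^{2^*}(\RN)$ restricted to $\Omega_n$ gives $u\in L^q(\Omega_n)$ for every $1\le q\le 2^*$, and in particular $|u|^p,|u|^{\frac{N+\beta}{N-2}}\in L^1(\Omega_n)$ with bounds controlled by $\|\nabla u\|_2$. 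Using the splitting \eqref{Gu} and \eqref{Gu2} together with \eqref{h1}, on the bounded set $\Omega_n$ both factors $G(u)$ and the truncation of $I_\beta*G(u)$ land in genuine Lebesgue spaces whose exponents are conjugate (or can be made so by Hölder on the bounded domain), so the product $\vp_n(x)\big(I_\beta*G(u)\big)G(u)$ is in $L^1(\RN)$. Concretely, one pairs the $L^{\frac{2N}{N-\beta}}+L^{\frac{2N}{(N-2)p-2\beta}}$ membership of $I_\beta*G(u)$ from \eqref{Gu2} against the $L^{\frac{2N}{N+\beta}}+L^{\frac{2^*}{p}}$ membership of $G(u)$ from \eqref{Gu}; the "bad'' pairings that prevented global integrability of $\mathcal{G}$ are now tamed because $\vp_n$ confines everything to $\Omega_n$, where lower exponents are absorbed by higher ones.

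\textbf{Differentiability.} Having finiteness, I would compute the candidate derivative
$$
\mathcal{G}_n'(u)[\psi]=2\irn \vp_n(x)\big(I_\beta * G(u)\big) g(u)\psi\, dx,\qquad \psi\in\D,
$$
(using the symmetry of the Riesz convolution to combine the two terms coming from differentiating each factor of $G$). The standard route is: (i) show this linear functional is bounded on $\D$, again via Hardy--Littlewood--Sobolev and \eqref{h1} restricted to $\Omega_n$, with $g(u)\psi$ placed in the appropriate sum of Lebesgue spaces on the bounded domain; (ii) show Gateaux differentiability by the mean value theorem applied to $t\mapsto \mathcal{G}_n(u+t\psi)$, interchanging limit and integral through a dominated-convergence argument where the dominating function is built from the $L^q(\Omega_n)$ bounds and \eqref{h1}; (iii) upgrade to Fréchet differentiability and continuity of $u\mapsto \mathcal{G}_n'(u)$ by showing that $u_k\to u$ in $\D$ implies $G(u_k)\to G(u)$ and $g(u_k)\to g(u)$ in the relevant space over $\Omega_n$ (continuity of the Nemytskii operators on the bounded domain, using that $u_k\to u$ in $L^q(\Omega_n)$ for $q<2^*$ and a.e. up to a subsequence), hence $I_\beta*G(u_k)\to I_\beta*G(u)$ in the dual space, and then passing to the limit in the bilinear pairing defining $\mathcal{G}_n'$.

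\textbf{Main obstacle.} The subtle point — and the reason the paper introduced these spaces $L^\mu+L^\nu$ at all — is bookkeeping the exponents in the sum spaces: one must verify that, after multiplying by $\vp_n$ and hence localizing to $\Omega_n$, each of the (up to four) cross terms obtained from the two splittings satisfies a valid Hölder/Hardy--Littlewood--Sobolev inequality, and that the resulting constants, while depending on $n$, are finite. On a bounded domain $L^\nu\hookrightarrow L^\mu$ for $\nu\ge\mu$, so in principle one can always reduce to the lowest relevant exponent, but one must check this lowest exponent is still $\ge 1$ and compatible with HLS; the constraint $p>\frac{2\beta}{N-2}$ in \eqref{h1} is exactly what guarantees $(N-2)p-2\beta>0$, so that the exponent $\frac{2N}{(N-2)p-2\beta}$ in \eqref{Gu2} is positive and the pairing closes. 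Establishing the continuity of the Nemytskii maps $u\mapsto G(u)$, $u\mapsto g(u)$ into these sum spaces over $\Omega_n$ — rather than a single $L^q$ — is the other place where care is needed, and is handled by treating the two summands in \eqref{h1} separately and invoking the standard continuity of Nemytskii operators between $L^q$ spaces on the bounded set $\Omega_n$.
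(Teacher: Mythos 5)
Your plan follows the same route as the paper's proof: finiteness and the derivative formula are obtained by localizing to $B_{2n}$, pairing the sum-space memberships \eqref{Gu} and \eqref{Gu2} via H\"older on the bounded set (where the higher exponent absorbs the lower one), and continuity of $\mathcal{G}_n$ and $\mathcal{G}_n'$ is obtained by dominated convergence together with a.e.\ (or Nemytskii) convergence of $G(u_k)$, $g(u_k)$ and of the convolution $I_\beta*G(u_k)$. The ingredients you single out, including the role of $p>\tfrac{2\beta}{N-2}$ in making the exponent $\tfrac{2N}{(N-2)p-2\beta}$ admissible, are exactly those the paper uses, so the proposal is correct and essentially identical in approach.
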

\begin{proof}
We divide the proof in five steps.\\
{\bf Step 1:} {\em $\mathcal{G}_n$ is well defined.}\\
Observe that 
\[
0 \leq \mathcal{G}_n(u)\leq \int_{B_{2n}} \big(I_\beta * G(u)\big) G(u)\, dx
\]
and, by \eqref{Gu} and \eqref{Gu2}, 
$I_\beta * G(u) \in L^\frac{2N}{N-\beta}(B_{2n})+ L^\frac{2N}{(N-2)p-2\beta}(B_{2n})\subset L^\frac{2N}{N-\beta}(B_{2n})$ and $G(u) \in L^{\frac{2N}{N+\beta}}(B_{2n})+L^{\frac{2^*}p}(B_{2n})\subset L^{\frac{2N}{N+\beta}}(B_{2n})$. Thus, the H\"older inequality allows us to conclude.\\
{\bf Step 2:} 
{\em if $\{u_m\}\subset L^{2^*}(\R^N)$ and $u_m\to u$ in $L^{2^*}(\R^N)$, then, up to a subsequence, $I_\beta * G(u_m) \to I_\beta * G(u)$ a.e. in $\R^N$, as $m\to +\infty$.}
\\
Since  $u_m \to u$ in $L^{2^*}(\R^N)$, then, up to a subsequence, there exist $\Omega_1\subset\RN$ with $|\Omega_1|=0$ and $w\in L^{2^*}(\R^N)$ such that $|u_m|\leq w$ and $u_m \to u$ in $\R^N\setminus \Omega_1$. 
\\
Since $w^p + w^\frac{N+\beta}{N-2}\in L^{\frac{2N}{N+\beta}}(\R^N)+L^{\frac{2^*}p}(\R^N)$,  by  \cite[Inequality (9), page 107]{LL}, we have that $ I_\beta * \left(w^p + w^\frac{N+\beta}{N-2}\right) \in L^{\frac{2N}{N-\beta}}(\R^N)+L^{\frac{2N}{(N-2)p-2\beta}}(\R^N)$ and so, there exists $\Omega_2\subset \RN$, with $|\Omega_2|=0$,  such that
\[
\frac{w^p(y) + w^\frac{N+\beta}{N-2}(y)}{|x-y|^{N-\beta}} \in L^1(\R^N),
\quad
\hbox{ for all }x\in \R^N\setminus \Omega_2.
\]
Thus, if we fix $x\in\R^N\setminus \Omega_2$, we have that
\[
\frac{G(u_m(y))}{|x-y|^{N-\beta}} \to \frac{G(u(y))}{|x-y|^{N-\beta}}, \quad
\text{ for all }y\in \R^N\setminus \Omega_1
\]
and
\[
\frac{G(u_m(y))}{|x-y|^{N-\beta}}
\leq
C \frac{|u_m(y)|^p + |u_m(y)|^\frac{N+\beta}{N-2}}{|x-y|^{N-\beta}}
\leq
C \frac{w^p(y) + w^\frac{N+\beta}{N-2}(y)}{|x-y|^{N-\beta}} \in L^1(\R^N).
\]
Hence, by the Lebesgue Dominated Convergence Theorem we can conclude.
\\
{\bf Step 3:} {\em $\mathcal{G}_n$ is continuous.}
\\
Let $\{u_m\} \subset \D$ be such that $u_m \to u$ in $\D$ as $m \to +\infty$. Up to a subsequence we have that $u_m \to u$ in $L^{2^*}(\R^N)$, $u_m \to u$ a.e. in $\R^N$, and there exists $w \in L^{2^*}(\R^N)$ such that $|u_m| \leq w$ a.e. in $\R^N$. Thus, since $G$ is continuous, $G(u_m) \to G(u)$ a.e. in $\R^N$ and, by Step 2, $I_\beta * G(u_m) \to I_\beta * G(u)$ a.e. in $\R^N$. Hence
\[
\varphi_n (x)\big(I_\beta * G(u_m)\big) G(u_m) \to \varphi_n (x)\big(I_\beta * G(u)\big) G(u)
\text{ a.e. in } \R^N, \text{ as }m\to+\infty.
\]
Moreover,
\[
0\leq
\varphi_n (x)\big(I_\beta * G(u_m)\big) G(u_m)
\leq
C \varphi_n (x)\big(I_\beta *(w^p + w^\frac{N+\beta}{N-2})\big)(w^p + w^\frac{N+\beta}{N-2})
\in L^1(\R^N)
\]
since, arguing as before,  $I_\beta * (w^p + w^\frac{N+\beta}{N-2}) \in  L^{\frac{2N}{N-\b}}(B_{2n})$ and $w^p + w^\frac{N+\beta}{N-2} \in L^{\frac{2N}{N+\beta}}(B_{2n})$. Thus, the Lebesgue Dominated Convergence Theorem allows us to conclude.\\
{\bf Step 4:} {\em $\mathcal{G}_n$ is differentiable and, for any $v\in \D$,
\[
\mathcal{G}_n'(u)[v]=2 \int_{\R^N} \varphi_n (x)\big(I_\beta * G(u)\big) g(u)v\, dx.
\]}
First we prove that
\begin{equation*}
\left| \int_{\R^N} \varphi_n (x)\big(I_\beta * G(u)\big) g(u)v\, dx \right| < +\infty.
\end{equation*}
Observe that
\[
\left| \int_{\R^N} \varphi_n (x)\big(I_\beta * G(u)\big) g(u)v\, dx \right|
\leq
\int_{B_{2n}} \big(I_\beta * G(u)\big) |g(u)| |v|\, dx,
\]
and, by assumptions on $g$,
\begin{equation}\label{gu}
|g(u)|\leq C(|u|^{p-1} + |u|^\frac{\beta+2}{N-2})
\in \begin{cases}
L^\frac{2N}{\beta+2}(\R^N)+L^\infty(\R^N), & \hbox{if } 0<\b<\frac{N-2}2,
\\
L^\frac{2N}{\beta+2}(\R^N)+L^\frac{2^*}{p-1}(\R^N),
 & \hbox{if } \frac{N-2}2\le\b<N.
\end{cases}
\end{equation}
In any case we have that $I_\beta * G(u) \in  L^\frac{2N}{N-\b}(B_{2n})$ and, by \eqref{gu}, $g(u)\in L^\frac{2N}{\beta+2}(B_{2n})$. Thus, the H\"older inequality allows us to conclude.\\
Finally, arguing as before, we prove that the map $$v\in \D\longmapsto  \int_{\R^N} \varphi_n(x) \big(I_\beta * G(u)\big) g(u)v\, dx$$ is continuous and this implies also the claim.\\
{\bf Step 5:} {\em $\mathcal{G}_n'$ is continuous.}
\\
Let $v\in \D$, with $\|\n  v\|_2 \leq 1$ and  $\{u_m\} \subset \D$ be such that $u_m \to u$ in $\D$ as $m \to +\infty$. Up to a subsequence we have that $u_m \to u$ in $L^{2^*}(\R^N)$, $u_m \to u$ a.e. in $\R^N$, and there exists $w \in L^{2^*}(\R^N)$ such that $|u_m| \leq w$ a.e. in $\R^N$. 
Moreover
\begin{align*}
\left| \mathcal{G}_n' (u_m)[v] - \mathcal{G}_n' (u)[v] \right|
& \leq
\int_{B_{2n}} \Big|\big(I_\beta * G(u_m)\big)g(u_m) -\big(I_\beta * G(u)\big)g(u)\Big| |v|\, dx
\\
& \leq 
C\left( \int_{B_{2n}} \Big|\big(I_\beta * G(u_m)\big)g(u_m) -\big(I_\beta * G(u)\big)g(u)\Big|^\frac{2N}{N+2}\, dx \right)^\frac{N+2}{2N} .
\end{align*}
Using also Step 2, we have that $\big(I_\beta * G(u_m)\big)g(u_m) \to [I_\beta * G(u)]g(u)$ a.e. in $\R^N$, and so, observing that, by \eqref{h1}, 
\begin{align*}
0\leq I_\beta*G(u_m) &\leq C I_\beta * (w^{p}+w^\frac{N+\beta}{N-2})\in L^\frac{2N}{N-\beta}(B_{2n}),\\
0\leq I_\beta*G(u) &\leq C I_\beta * (|u|^{p}+|u|^\frac{N+\beta}{N-2})\in L^\frac{2N}{N-\beta}(B_{2n}),
\end{align*}
and 
\begin{align*}
|g(u_m)| &\leq C (w^{p-1}+w^\frac{\beta+2}{N-2})\in L^\frac{2N}{\beta + 2}(B_{2n}),\\
|g(u)| &\leq C (|u|^{p-1}+|u|^\frac{\beta+2}{N-2})\in L^\frac{2N}{\beta + 2}(B_{2n}),
\end{align*}
we can conclude by the Lebesgue Dominated Convergence Theorem.
\end{proof}

Now we prove this further compactness result.
\begin{Lem}\label{pippa}
Let $u_n\weakto u_0$ in $\D$.
Then, for any $\psi\in\cC_0^{\infty}(\R^N)$,
\[
\lim_{n}\int_{\R^N} \varphi_n (x)(I_\beta*G(u_n)) g(u_n) \psi \, dx=
\int_{\R^N}  (I_\beta*G(u_0)) g(u_0) \psi\, dx.
\]
\end{Lem}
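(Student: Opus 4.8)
The plan is the following. Fix $R>0$ with $\supp\psi\subset B_R$. For $n\geq R$ we have $\varphi_n\equiv 1$ on $B_n\supset B_R\supset\supp\psi$, so the cut-off disappears on $\supp\psi$ and
\[
\int_{\R^N}\varphi_n(x)(I_\beta*G(u_n))g(u_n)\psi\,dx=\int_{B_R}(I_\beta*G(u_n))g(u_n)\psi\,dx,\qquad n\geq R.
\]
The only genuine difficulty is that $I_\beta*G(u_n)$ still involves $G(u_n)$ over all of $\R^N$, where $u_n$ converges to $u_0$ only weakly. I would first record two preliminary facts. (i) Since $u_n\weakto u_0$ in $\D$ is bounded in $L^{2^*}(\R^N)$, \eqref{h1} gives that $\{G(u_n)\}$ is bounded in $L^{2N/(N+\beta)}(\R^N)+L^{2^*/p}(\R^N)$, whence, arguing as for \eqref{Gu2} and using the embedding $L^{2N/((N-2)p-2\beta)}(B_R)\hookrightarrow L^{2N/(N-\beta)}(B_R)$ (valid since $\frac{2\beta}{N-2}<p\leq\frac{N+\beta}{N-2}$), the sequence $\{I_\beta*G(u_n)\}$ is bounded in $L^{2N/(N-\beta)}(B_R)$. (ii) By the compact embedding $\D\hookrightarrow\hookrightarrow L^q(B_R)$ for $1\leq q<2^*$ together with the continuity of the Nemytskii operators associated with $g$ and $G$ between the appropriate Lebesgue spaces (a consequence of the growth in \eqref{h1}), one has $g(u_n)\to g(u_0)$ in $L^{2N/(N+\beta)}(B_R)$ and $G(u_n)\to G(u_0)$ in $L^r(B_R)$ for every $r<2N/(N+\beta)$; moreover $g(u_0)\psi\in L^{2N/(\beta+2)}(B_R)$.

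Next I would split
\[
\int_{B_R}(I_\beta*G(u_n))g(u_n)\psi\,dx=\int_{B_R}(I_\beta*G(u_n))(g(u_n)-g(u_0))\psi\,dx+\int_{B_R}(I_\beta*G(u_n))g(u_0)\psi\,dx.
\]
By H\"older's inequality with the conjugate exponents $2N/(N-\beta)$ and $2N/(N+\beta)$, facts (i) and (ii) show that the first integral tends to $0$. For the second one, fix $\rho>2R$ and write $G(u_n)=G(u_n)\mathbf 1_{B_\rho}+G(u_n)\mathbf 1_{\R^N\setminus B_\rho}$. For the near part, (ii) gives $G(u_n)\mathbf 1_{B_\rho}\to G(u_0)\mathbf 1_{B_\rho}$ in $L^r(\R^N)$ for $r<2N/(N+\beta)$, hence, by the Hardy--Littlewood--Sobolev inequality, $I_\beta*(G(u_n)\mathbf 1_{B_\rho})\to I_\beta*(G(u_0)\mathbf 1_{B_\rho})$ in $L^s(\R^N)$ for $s<2N/(N-\beta)$; since $g(u_0)\psi\in L^{2N/(\beta+2)}(B_R)\subset L^{s'}(B_R)$ once $s$ is close enough to $2N/(N-\beta)$ (here $2N/(\beta+2)>2N/(N+\beta)$ because $N>2$), the corresponding integrals over $B_R$ converge as $n\to+\infty$ for each fixed $\rho$.

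The main point, and the step I expect to be the real obstacle, is the far part, which rests on the uniform estimate
\[
\sup_{n\in\N}\ \sup_{x\in B_R}\ \big|I_\beta*(G(u_n)\mathbf 1_{\R^N\setminus B_\rho})(x)\big|\leq C\rho^{-\delta}\qquad\text{for some }\delta>0,
\]
and likewise with $u_n$ replaced by $u_0$. I would prove it by H\"older's inequality in the $y$ variable: using $0\leq G(u_n)\leq C(|u_n|^{p}+|u_n|^{(N+\beta)/(N-2)})$, with $\{|u_n|^{p}\}$ bounded in $L^{2^*/p}(\R^N)$ and $\{|u_n|^{(N+\beta)/(N-2)}\}$ bounded in $L^{2N/(N+\beta)}(\R^N)$, together with $|x-y|\geq|y|/2$ for $x\in B_R$ and $|y|>\rho>2R$, one is reduced to estimating $\int_{|y|>\rho}|y|^{-(N-\beta)q}\,dy$ for $q=2N/(N-\beta)$ and for $q=2^*/(2^*-p)$ (the conjugates of $2N/(N+\beta)$ and $2^*/p$). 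The first integral converges since $(N-\beta)\cdot\tfrac{2N}{N-\beta}=2N>N$; the second converges precisely because $(N-\beta)\cdot\tfrac{2^*}{2^*-p}>N\iff p>\tfrac{2\beta}{N-2}$, which is exactly the lower bound on $p$ in \eqref{h1}, and both integrals are $O(\rho^{-\delta})$. Granting this, given $\eps>0$ one first picks $\rho$ so large that the two far parts contribute at most $\eps$ uniformly in $n$, and then lets $n\to+\infty$ with $\rho$ fixed, using the near-part convergence; this yields $\limsup_{n}|\int_{B_R}(I_\beta*G(u_n))g(u_0)\psi\,dx-\int_{B_R}(I_\beta*G(u_0))g(u_0)\psi\,dx|\leq C\eps$, and letting $\eps\to 0$ completes the proof. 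Since (i), (ii) and the tail estimate involve the full sequence, no passage to a subsequence is needed; apart from the far-field bound and the appearance of the condition $p>2\beta/(N-2)$, everything else is routine bookkeeping with H\"older and Hardy--Littlewood--Sobolev exponents on the bounded set $B_R$.
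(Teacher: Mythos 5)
Your proof is correct, and for the key compactness step it takes a genuinely different route from the paper. Both arguments begin the same way (reduce to $\supp\psi$ where $\varphi_n\equiv 1$, then split off the piece with $|g(u_n)-g(u_0)|$, which is handled in both cases by local strong convergence and H\"older against the $L^{\frac{2N}{N-\beta}}(B_R)$-bound on $I_\beta\ast G(u_n)$). The divergence is in the remaining term $\int (I_\beta\ast G(u_n))g(u_0)\psi\,dx$: the paper proves that $G(u_n)\weakto G(u_0)$ weakly in the sum space $L^{\frac{2N}{N+\beta}}(\R^N)+L^{\frac{2^*}{p}}(\R^N)$ (using reflexivity of these spaces and a Mazur-lemma argument to identify the weak limit), shows that $w\mapsto I_\beta\ast w$ is continuous between the relevant sum spaces, and then tests the resulting weak convergence of $I_\beta\ast G(u_n)$ against $g(u_0)\psi$, which lies in the dual. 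You instead avoid the $L^\mu+L^\nu$ machinery of \cite{BPR} entirely by splitting $G(u_n)$ into a near part on $B_\rho$, treated by compact local embedding plus Hardy--Littlewood--Sobolev, and a far part controlled by the uniform pointwise decay $\sup_{x\in B_R}|I_\beta\ast(G(u_n)\mathbf 1_{\R^N\setminus B_\rho})(x)|\le C\rho^{-\delta}$. Your tail estimate is where the hypothesis $p>\frac{2\beta}{N-2}$ enters (it makes $\int_{|y|>\rho}|y|^{-(N-\beta)\frac{2^*}{2^*-p}}dy$ finite), whereas in the paper the same hypothesis enters more implicitly, by guaranteeing that $\frac{2N}{(N-2)p-2\beta}$ is a legitimate Lebesgue exponent for the sum space in which $I_\beta\ast G(u_n)$ is bounded. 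What each approach buys: yours is elementary and self-contained (no reflexivity, no identification of weak limits, no duality of sum spaces, and no passage to subsequences), at the price of an explicit quantitative far-field estimate; the paper's is softer and reuses ready-made functional-analytic results, which also feed into the other places where the sum spaces appear. Both are complete proofs of the lemma.
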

\begin{proof}
Of course it is enough to show that
\[
\lim_{n}\int_{\operatorname{supp}(\psi)}  \big(I_\beta*G(u_n)\big) g(u_n) \psi \, dx= \int_{\operatorname{supp}(\psi)}  \big(I_\beta*G(u_0)\big) g(u_0) \psi\, dx,
\]
recalling that $\operatorname{supp}(\psi)$ is compact and then, for $n$ large enough, $\operatorname{supp}(\psi)\subset B_{n}$.\\
Since $u_n \rightharpoonup  u_0$ weakly in $\D$,  up to a subsequence, $u_n \to u_0$ a.e. in $\R^N$ and so $G(u_n) \to G(u_0)$ a.e. in $\R^N$, as $n\to +\infty$.\\
Moreover $\{G(u_n)\}$ is bounded in $L^{\frac{2N}{N+\beta}}(\R^N)+L^{\frac{2^*}p}(\R^N)$. Indeed, by the assumptions on $g$, the definition of the norm in $L^{\frac{2N}{N+\beta}}(\R^N)+L^{\frac{2^*}p}(\R^N)$, and \cite[Corollary 2.12]{BPR}, we have
\[
\|G(u_n)\|_{\frac{2N}{N+\beta},\frac{2^*}p}
\leq
C (\|u_n\|_{2^*}^p+\|u_n\|_{2^*}^\frac{N+\beta}{N-2})\leq C.
\]
Thus, the reflexivity of $L^{\frac{2N}{N+\beta}}(\R^N)+L^{\frac{2^*}p}(\R^N)$ (see \cite[Corollary 2.11]{BPR}) implies that there exists $\tilde{u}\in L^{\frac{2N}{N+\beta}}(\R^N)+L^{\frac{2^*}p}(\R^N)$ such that, up to a subsequence,  $G(u_n)\rightharpoonup \tilde{u}$ in $L^{\frac{2N}{N+\beta}}(\R^N)+L^{\frac{2^*}p}(\R^N)$.\\
We claim that $\tilde{u}=G(u_0)$.\\
Indeed, using a classical argument,
(see e.g. \cite[Exercise 3.4]{Brezis}),
the weak convergence $G(u_n)\rightharpoonup \tilde{u}$ in $L^{\frac{2N}{N+\beta}}(\R^N)+L^{\frac{2^*}p}(\R^N)$ implies that there exists a sequence $\{z_n\}\subset L^{\frac{2N}{N+\beta}}(\R^N)+L^{\frac{2^*}p}(\R^N)$ such that, for all $n\in\N$,
\[
z_n \in \operatorname{conv}\Big(\bigcup_{i=1}^n \{G(u_i)\}\Big)
\]
and $z_n \to \tilde{u}$ in $L^{\frac{2N}{N+\beta}}(\R^N)+L^{\frac{2^*}p}(\R^N)$.  Thus, by \cite[Proposition 2.8]{BPR}, up to a subsequence, we get that $z_n \to \tilde{u}$ a.e. in $\R^N$, that allows us to conclude.\\
About the sequence $\{I_\beta *G(u_n)\}$, since by \eqref{h1}
\[
0\le I_\beta * G(u_n) \leq C(I_\beta * |u_n|^p + I_\beta * |u_n|^\frac{N+\beta}{N-2}) \in L^{\frac{2N}{N-\beta}}(\R^N)+L^{\frac{2N}{(N-2)p-2\beta}}(\R^N),
\]
using \cite[Corollary 2.12]{BPR}, we have
\begin{align*}
\|I_\beta *G(u_n)\|_{\frac{2N}{N-\beta},\frac{2N}{(N-2)p-2\beta}}
&\leq C (\| I_\beta * |u_n|^\frac{N+\beta}{N-2}\|_\frac{2N}{N-\beta}
+ \| I_\beta * |u_n|^p\|_\frac{2N}{(N-2)p-2\beta})\\
&\leq C (\| u_n\|_{2^*}^\frac{N+\beta}{N-2}
+ \| u_n\|_{2^*}^p)
\leq C.
\end{align*}
Moreover, observe that the linear functional
\[
w\in L^\frac{2N}{N+\beta}(\R^N)+L^\frac{2^*}{p}(\R^N) \mapsto I_\beta *w \in L^{\frac{2N}{N-\beta}}(\R^N)+L^{\frac{2N}{(N-2)p-2\beta}}(\R^N)
\]
is continuous. Indeed, if $w\in  L^\frac{2N}{N+\beta}(\R^N)+L^\frac{2^*}{p}(\R^N)$, $w=w_1+w_2$ with $w_1\in L^\frac{2N}{N+\beta}(\R^N) $ and $w_2\in L^\frac{2^*}{p}(\R^N)$, by  \cite[Inequality (9), page 107]{LL} we get
	\[
	\|I_\beta *w \|_{\frac{2N}{N-\beta},\frac{2N}{(N-2)p-2\beta}}
	\leq
	\|I_\beta *w_1 \|_{\frac{2N}{N-\beta}} + \|I_\beta *w_2 \|_{\frac{2N}{(N-2)p-2\beta}}
	\leq
	C(  \| w_1 \|_\frac{2N}{N+\beta} + \| w_2 \|_\frac{2^*}{p})
	\]
	and, passing to the infimum on $w_1\in L^\frac{2N}{N+\beta}(\R^N) $ and $w_2\in L^\frac{2^*}{p}(\R^N)$, we conclude.\\
This, combined with the weak convergence $G(u_n)\rightharpoonup G(u_0)$ in $L^{\frac{2N}{N+\beta}}(\R^N)+L^{\frac{2^*}p}(\R^N)$, implies that $I_\beta * G(u_n)\rightharpoonup I_\beta*G(u_0)$ in $L^{\frac{2N}{N-\beta}}(\R^N)+L^{\frac{2N}{(N-2)p-2\beta}}(\R^N)$.\\
Hence, as done for $f$ in Lemma \ref{le:BL1}, we have that
\begin{multline*}
\left|
\int_{\operatorname{supp}(\psi)} \big(I_\b\ast G(u_n)\big)g(u_n)\psi\,dx
-\int_{\operatorname{supp}(\psi)} \big(I_\b\ast G(u_0)\big)g(u_0)\psi\,dx
\right|
\\
\le \int_{\operatorname{supp}(\psi)} \big(I_\b\ast G(u_n)\big)|g(u_n)-g(u_0)||\psi|\,dx
+
\left|\int_{\R^N} \big( I_\b\ast G(u_n)-I_\a\ast G(u_0)\big) g(u_0)\psi\,dx\right|.
\end{multline*}
About the first integral, observe that, the boundedness of $\{u_n\}$ in $\D$ implies also that $u_n \to u_0$ in $L_{\rm loc}^\tau (\R^N)$, for all $1\leq \tau<2^*$ and  so, for any fixed $1\leq \tau<2^*$ and $K\subset\subset \R^N$,
up to a subsequence,  there exists $w_K\in L^\tau (K)$ such that $|u_n|\leq w_K$ a.e. in $K$.
Thus,  denoting for simplicity $w:=w_{\operatorname{supp}(\psi)}$ and taking for instance
\[
\tau=\frac{N(N+2\beta+2)}{(N-2)(N+\beta)},
\]
by the assumptions on $g$ we have
\[
g(u_n)\to g(u_0)
\text{ a.e. in } \operatorname{supp}(\psi),
\]
\[
|g(u_n)|
\leq C (|u_n|^{p-1} + |u_n|^\frac{\beta+2}{N-2})
\leq C (w^{p-1} + w^\frac{\beta+2}{N-2})
\in L^\frac{N(N+2\beta+2)}{(N+\beta)(\beta+2)}(\operatorname{supp}(\psi)),
\]
\[
|g(u_0)|
\leq C (|u_0|^{p-1} + |u_0|^\frac{\beta+2}{N-2})
 \in L^\frac{N(N+2\beta+2)}{(N+\beta)(\beta+2)}(\operatorname{supp}(\psi)).
\]
Moreover, the boundedness of $\{I_\beta *G(u_n)\}$ in $L^{\frac{2N}{N-\beta}}(\R^N)+L^{\frac{2N}{(N-2)p-2\beta}}(\R^N)$ implies its boundedness in $L^{\frac{2N}{N-\beta}}(\operatorname{supp}(\psi))+L^{\frac{2N}{(N-2)p-2\beta}}(\operatorname{supp}(\psi))= L^{\frac{2N}{N-\beta}}(\operatorname{supp}(\psi))$.\\
Thus, by the H\"older inequality and the Lebesgue Dominated Convergence Theorem, we have
\begin{align*}
\int_{\operatorname{supp}(\psi)} \big(I_\b\ast G(u_n)\big)|g(u_n)-g(u_0)||\psi|\,dx
&\leq
C \Big(\int_{\operatorname{supp}(\psi)} |g(u_n)-g(u_0)|^\frac{2N}{N+\beta}|\psi|^\frac{2N}{N+\beta}\,dx\Big)^\frac{N+\beta}{2N}\\
& \leq
C 
\Big( \int_{\operatorname{supp}(\psi)} |g(u_n)-g(u_0)|^{\frac{N(N+2\beta+2)}{(N+\beta)(\beta+2)}}\,dx\Big)^\frac{(N+\beta)(\beta+2)}{N(N+2\beta+2)}
=o_n(1).
\end{align*}
Finally the second integral goes to $0$ due to the weak convergence $I_\beta * G(u_n)\rightharpoonup I_\beta*G(u_0)$ in $L^{\frac{2N}{N-\beta}}(\R^N)+L^{\frac{2N}{(N-2)p-2\beta}}(\R^N)$, since
$g(u_0)\psi \in L^\frac{2N}{N+\beta}(\operatorname{supp}(\psi))
\subset
[L^{\frac{2N}{N-\beta}}(\R^N)+L^{\frac{2N}{(N-2)p-2\beta}}(\R^N)]'$, being
	\begin{align*}
	\int_{\operatorname{supp}(\psi)} |g(u_0)\psi |^\frac{2N}{N+\beta}\, dx
	&\leq
	C  \int_{\operatorname{supp}(\psi)} (|u_0|^{p-1} + |u_0|^\frac{\beta+2}{N-2})^\frac{2N}{N+\beta} |\psi |^\frac{2N}{N+\beta}\, dx\\
	&\leq
	 C \Big(\int_{\operatorname{supp}(\psi)} (|u_0|^{p-1} + |u_0|^\frac{\beta+2}{N-2})^{\frac{N(N+2\beta+2)}{(N+\beta)(\beta+2)}}\, dx\Big)^\frac{2(\beta+2)}{N+2\beta+2}<+\infty.
	\end{align*}
\end{proof}

\section{Proofs of our main results}

Let $X:=\Dr$, or $X:=\Dnr\cap X_\tau$ provided that $N\geq 4$ and $N\neq 5$.
As observed  before, the functional $\cI$ could be also $+\infty$ on $X$. To avoid this problem, for every $n\geq 1$, we introduce the truncated $\cC^1$-functionals
 $\cI_n: X\to \R$ defined by \eqref{defIn}.
 
The functionals $\cI$ and $\cI_n$, $n\geq 1$, satisfy the geometrical assumptions of the  Mountain Pass Theorem. Indeed, we prove the following lemma.
\begin{Lem}\label{lemv0}
	We have:
	\begin{enumerate}[label=(\roman*),ref=\roman*]
		\item \label{MPIi} there exist $\rho, c>0$ such that $\cI(u)\ge c$ and, for every $n\geq 1$, $\cI_n(u)\geq c$ for all $u\in X$ such that $\|\nabla u\|_2 = \rho$;
		\item \label{MPIii} there exists $v_0\in X$ with $\|\nabla v_0\|_2>\rho$ such that $\cI(v_0)<0$ and, for every $n\geq 1$, $\cI_n(v_0)<0$.
	\end{enumerate}
\end{Lem}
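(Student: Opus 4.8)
The plan is to verify the two Mountain Pass geometry conditions directly from the growth assumptions and the explicit form of $\cI$ and $\cI_n$, exploiting the crucial sign information $\mathcal{G}(u)\geq 0$ and $\mathcal{G}_n(u)\geq 0$ (so that the repulsive term only helps near zero), together with the scaling behaviour of the attractive term $\mathcal{F}$. For item \eqref{MPIi}, I would start from
$$
\cI_n(u)=\|\nabla u\|_2^2-\mathcal{F}(u)+\mathcal{G}_n(u)\geq \|\nabla u\|_2^2-\mathcal{F}(u),
$$
and similarly $\cI(u)\geq\|\nabla u\|_2^2-\mathcal{F}(u)$, so it suffices to bound $\mathcal{F}$ near the origin uniformly in $n$. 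By \eqref{ass0}, for any $\eps>0$ there is $C_\eps>0$ with $|F(s)|\leq \eps|s|^{\frac{N+\alpha}{N-2}}+C_\eps|s|^{\frac{N+\alpha}{N-2}}$—wait, more precisely \eqref{ass0} gives smallness of $F(s)/|s|^{\frac{N+\alpha}{N-2}}$ both as $s\to 0$ and $|s|\to\infty$, so actually $|F(s)|\leq \eps|s|^{\frac{N+\alpha}{N-2}}$ for all $s$ once $\eps$ is replaced by a fixed small constant times the sup; in any case $|F(s)|\leq C|s|^{\frac{N+\alpha}{N-2}}$ globally and, refining, for every $\eps>0$ there is $C_\eps$ with $|F(s)|\leq\eps|s|^{\frac{N+\alpha}{N-2}}$ outside a bounded set and $|F(s)|\leq C_\eps|s|^{\frac{N+\alpha}{N-2}}$ everywhere — the point being that the Hardy--Littlewood--Sobolev inequality then yields
$$
\mathcal{F}(u)\leq C\|F(u)\|_{\frac{2N}{N+\alpha}}^2\leq C\|u\|_{2^*}^{\frac{2(N+\alpha)}{N-2}}\leq C\|\nabla u\|_2^{\frac{2(N+\alpha)}{N-2}},
$$
using the Sobolev embedding $\D\hookrightarrow L^{2^*}(\RN)$. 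Since $\frac{N+\alpha}{N-2}>1$, we get $\cI_n(u),\cI(u)\geq \rho^2-C\rho^{\frac{2(N+\alpha)}{N-2}}$ on the sphere $\|\nabla u\|_2=\rho$, which is positive for $\rho>0$ small; choosing such a $\rho$ and setting $c:=\tfrac12\rho^2$ (after shrinking $\rho$ further if needed) proves \eqref{MPIi}, with constants independent of $n$.

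For item \eqref{MPIii} I would use the scaling $u_\lambda(x):=u(x/\lambda)$ for $\lambda>1$ and a well-chosen fixed $u\in X$. The idea is that under this dilation $\|\nabla u_\lambda\|_2^2=\lambda^{N-2}\|\nabla u\|_2^2$, $\mathcal{F}(u_\lambda)=\lambda^{N+\alpha}\mathcal{F}(u)$, and $\mathcal{G}_n(u_\lambda)\leq \mathcal{G}(u_\lambda)=\lambda^{N+\beta}\mathcal{G}(u)$ (when $\mathcal{G}(u)<+\infty$; otherwise one works with $\mathcal{G}_n$ directly and a more careful estimate, see below). Since $N+\alpha>N-2$ and $N+\alpha\geq N+\beta$, provided $\mathcal{F}(u)>0$ the dominant term as $\lambda\to+\infty$ is $-\lambda^{N+\alpha}\mathcal{F}(u)$, so $\cI(u_\lambda)\to-\infty$ and likewise $\cI_n(u_\lambda)\leq \lambda^{N-2}\|\nabla u\|_2^2-\lambda^{N+\alpha}\mathcal{F}(u)+\lambda^{N+\beta}\mathcal{G}(u)\to-\infty$, uniformly enough that for $\lambda$ large, $v_0:=u_\lambda$ works for all $n$ simultaneously — indeed $\cI_n(v_0)\leq\cI(v_0)$ since $\mathcal{G}_n(v_0)\leq\mathcal{G}(v_0)$, so it is enough to make $\cI(v_0)<0$ and $\|\nabla v_0\|_2>\rho$. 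The case $\alpha=\beta$ is the delicate one: then $\mathcal{F}$ and $\mathcal{G}$ scale identically, and I would instead need $\mathcal{F}(u)>\mathcal{G}(u)$ for the chosen $u$; this is exactly what assumption \eqref{xi1} is designed to provide, via $F(s_0)>G(s_0)$ at a suitable point $s_0$.

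The remaining task, and the main obstacle, is to produce a fixed function $u\in X$ with $\mathcal{F}(u)>0$ (respectively $\mathcal{F}(u)>\mathcal{G}(u)$ when $\alpha=\beta$), since $F$ is a general nonlinearity that may change sign. This is the standard Berestycki--Lions-type construction: starting from the point $s_0>0$ furnished by \eqref{xi1}, one takes a cutoff/plateau function equal to $s_0$ on a large ball $B_R$, decaying to $0$ on the annulus $B_{2R}\setminus B_R$, and $0$ outside; by choosing $R$ large the double-integral $\mathcal{F}$ is dominated by its contribution from $B_R\times B_R$, which is a positive multiple of $F(s_0)^2 R^{N+\alpha}$ (positive because $F(s_0)\neq 0$), while the annular corrections are of lower order, and when $\alpha=\beta$ the same computation gives $\mathcal{F}(u)-\mathcal{G}(u)\sim c(F(s_0)^2-G(s_0)^2)R^{N+\alpha}>0$ using $F(s_0)>G(s_0)\geq 0$. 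One must also check $u\in X$: in the radial case $X=\Dr$ this is automatic for a radial plateau; in the nonradial case $X=\Dnr\cap X_\tau$ one takes instead an antisymmetric sum of two such bumps placed at $\cO$-orbits related by $\tau$, so that $u(\tau x)=-u(x)$, chosen with disjoint supports so the cross terms in $\mathcal{F}$ between a bump and its negative still combine with a definite sign (here one uses that $F$ need not be even, so a little care is needed — but since in Theorem \ref{ThMain2} $f,g$ are odd, $F,G$ are even and $F(-s_0)^2=F(s_0)^2$, the construction goes through). Since this lemma only claims existence of a single such $v_0$, these geometric constructions, though somewhat technical, are routine once $\mathcal{F}(u)>0$ (or $>\mathcal{G}(u)$) is secured for the plateau function, and the uniformity in $n$ is free from $\mathcal{G}_n\leq\mathcal{G}$ and $\mathcal{G}_n\geq 0$.
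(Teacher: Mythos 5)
Your proposal is correct and follows essentially the same route as the paper: for (\ref{MPIi}) you drop the nonnegative terms $\mathcal{G},\mathcal{G}_n$ and control $\mathcal{F}$ by the Hardy--Littlewood--Sobolev and Sobolev inequalities via $|F(s)|\le C|s|^{\frac{N+\alpha}{N-2}}$, and for (\ref{MPIii}) you use the dilation $u(\cdot/\lambda)$, the bound $\mathcal{G}_n\le\mathcal{G}$, a plateau function at height $s_0$, and the condition $F(s_0)>G(s_0)$ (with $G$ nondecreasing on $[0,s_0]$) to get $\mathcal{F}-\mathcal{G}>0$ when $\alpha=\beta$. The only cosmetic differences are in the implementation of the test function: the paper shrinks the transition annulus around $B_1$ and invokes continuity of $\mathcal{F}$ on $L^{2^*}(\RN)$ rather than sending the plateau radius to infinity, and in the nonradial case it uses the cutoff $\eta(|x_1|-|x_2|)\psi$ rather than two antisymmetric bumps (note your bumps would have to be supported near $\cO$-orbits, not balls, to lie in $\Dnr$).
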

\begin{proof}
	We prove this lemma only for $\cI_n$ since similar and easier arguments hold also for $\cI$.
	\\
	The positivity of $G$ and $\vp_n$,  (\ref{h1}) and (\ref{ass0}), the Hardy-Littlewood-Sobolev and Sobolev inequalities imply
	\begin{align*}
	\cI_n(u)
	\geq
	\|\nabla u\|_2^2 - C \int_{\R^N} \left(I_\a * |u|^\frac{N+\a}{N-2}\right)|u|^\frac{N+\a}{N-2}\,dx
	\geq
	\|\nabla u\|_2^2 - C \|u\|_{2^*}^\frac{2(N+\a)}{N-2}
	\geq
	\|\nabla u\|_2^2 - C \|\nabla u\|_{2}^\frac{2(N+\a)}{N-2}.
	\end{align*}
	Since $2<\frac{2(N+\a)}{N-2}$, we get (\ref{MPIi}).\\
	Now let us prove (\ref{MPIii}).\\
	 {\em Case $X=\Dr$.} 
Let $w=s_0 \chi_{B_1}$, where $s_0$ is defined in \eqref{xi1}, then
	\begin{equation*}
	\mathcal{F}(w)
	=
	F^2(s_0) \iint_{B_{1}\times B_{1}}I_\alpha (x-y)\,dxdy>0.
	\end{equation*}
We take now $\psi \in \cC_0^\infty(\RN)$ radial, non-negative, non-increasing with respect to $|x|$, and such that $\psi(x)=s_0$, for $|x|\leq 1$, and $\psi(x)=0$, for $|x|\ge \bar r$, with $\bar r>1$. If $\bar{r}$ is sufficiently close to $1$, using the continuity of $\mathcal{F}$ in $L^{2^*}(\RN)$, we get also
	\begin{equation}\label{posit}
	\mathcal{F}(\psi)
>0.
	\end{equation}	
	We consider first the case	$\alpha>\beta$. 
	\\
	If we set $\psi_\lambda(x):=\psi(x/\lambda)$, $\lambda>0$ and since $0\le \vp_n\le 1$ we have 
	\begin{equation*}
	\int_{\R^N} \vp_n(x)\big(I_\b * G(\psi_\lambda)\big)G(\psi_\lambda)\,dx
	\leq
	\lambda^{N+\b} 	\int_{\R^N} \big(I_\b * G(\psi)\big)G(\psi)\,dx<+\infty.
	\end{equation*}
	So we infer that
	\[
	\cI_n(\psi_\lambda)
	\leq
	\lambda^{N-2}\|\nabla \psi\|_2^2
	-\lambda^{N+\a} \mathcal{F}(\psi)
	+\lambda^{N+\beta} \mathcal{G}(\psi)
	\]
	and	we can conclude considering  $v_0:=\psi_\lambda$ with $\lambda$ large enough, by \eqref{posit}.
	\\
	We now study the case $\alpha=\beta$.
	\\
	If $G(s_0)= 0$, being, by \eqref{h1}, $G$ non-decreasing on $\R_+$, then  $G(\psi(x))=0$ in $\R^N$ and so we can conclude easily as before.
	\\
	If, instead, $G(s_0)\neq 0$, by (\ref{xi1}) we can find $\eps>0$ sufficiently small such that $(1-\eps)F^2(s_0)>G^2(s_0)>0$. Moreover there exists $\bar r>1$  sufficiently close to $1$ such that
	\[
	1<\frac{\dis\iint_{B_{\bar r}\times B_{\bar r}}I_\alpha (x-y)\,dxdy}{\dis\iint_{B_{1}\times B_{1}}I_\alpha (x-y)\,dxdy}<\frac{(1-\eps)F^2(s_0)}{G^2(s_0)}
	\]
and, again by the continuity of $\mathcal{F}$  in $L^{2^*}(\RN)$, 
	\begin{equation*}
		\mathcal{F}(\psi)
	\geq
	(1-\eps)F^2(s_0) \iint_{B_{1}\times B_{1}}I_\alpha (x-y)\,dxdy>0.
	\end{equation*}	
	Therefore,
by the positivity of $G$, we deduce that
\[
	 \mathcal{F}(\psi) - \mathcal{G}(\psi) \geq
	(1-\eps)F^2(s_0) \iint_{B_{1}\times B_{1}}I_\alpha (x-y)\,dxdy
	-
	G^2(s_0) \iint_{B_{\bar r}\times B_{\bar r}}I_\alpha (x-y)\,dxdy
	>0.
\]
	Thus we get
	\[
	\cI_n(\psi_\lambda)
	\leq\lambda^{N-2}\|\nabla \psi\|_2^2
	-\lambda^{N+\a}[ \mathcal{F}(\psi)
	- \mathcal{G}(\psi) ],
	\]
	we can conclude again considering  $v_0:=\psi_\lambda$ with $\lambda$ large enough.\\	
{\em Case $X=\Dnr\cap X_\tau$.}\\
We take $\eps\in (0,1/2)$ and any odd and smooth function $\eta:\R\to [-1,1]$ such that $\eta(s)=1$ for $s\geq 1/2$ and $\eta(s)=0$ for $s\leq 1/2-\eps$. Then we define $\widetilde{\psi}(x)=\eta(|x_1|-|x_2|)\psi(x)$ for $x=(x_1,x_2,x_3)\in\R^M\times\R^M\times\R^{N-2M}$, with the same $\psi$ as before. Observe that $\widetilde{\psi}\in X$. 
Moreover, arguing as in the previous case, we can find $\bar{r}>1$, sufficiently close to $1$,  and $\eps>0$, sufficiently close to $0$, such that, using the continuity of $\mathcal{F}$  in $L^{2^*}(\RN)$, 
$$\mathcal{F}(\widetilde\psi)
\geq
\frac 12 F^2(s_0) \iint_{B_{1}\times B_{1}\cap \{|x_1|\geq |x_2|+1/2,|y_1|\geq |y_2|+1/2\}}I_\alpha (x-y)\,dxdy>0.$$
Then we argue similarly as in case $X=\Dr$.
\end{proof}

Let
\begin{eqnarray*}
\Gamma:=\left\{ \gamma\in\cC\left([0,1],X\right): \gamma(0)=0\hbox{ and }\gamma(1)=v_0\right\}
\end{eqnarray*}
and
\begin{eqnarray*}
c_{\cI_n}&:=&\inf_{\gamma\in\Gamma}\sup_{t\in [0,1]}\cI_n(\gamma(t)),
\quad c_{\cI}:=\inf_{\gamma\in\Gamma}\sup_{t\in [0,1]}\cI(\gamma(t)).
\end{eqnarray*}

Our aim is to find a sequence $\{u_n\}\subset X$ such that $\cI_n(u_n)=c_{\cI_n}$ and $\cI_n'(u_n)\to 0$, as $n \to +\infty$. However, due to the general assumptions on $F$ and $G$, it is not easy to prove the boundedness of such sequence. Therefore, inspired by \cite{HIT,jj}, we introduce  the functional $\cJ:\R\times X\to \R\cup \{+\infty\}$
\[
\cJ(\sigma, u)
:=\cI(u(e^{-\s}\cdot))=
e^{(N-2)\s}\|\nabla u\|_2^2
-e^{(N+\a)\s} \mathcal{F}(u)
+e^{(N+\b)\s} \mathcal{G}(u),
\]
and, for every $n\geq 1$, the $\cC^1$-functionals $\cJ_n:\R\times X\to \R$
\[
\cJ_n(\sigma, u)
:=\cI_n(u(e^{-\s}\cdot))=
e^{(N-2)\s}\|\nabla u\|_2^2
-e^{(N+\a)\s}\mathcal{F}(u)
+e^{(N+\b)\s}\int_{\R^N}\vp_n(e^\s x)\big(I_\beta\ast G(u)\big)G(u)\,dx.
\]
Since the functional  $\cJ_n$ is non-autonomous, contrary to \cite{HIT,jj}, we have to deal with an additional term which appears in the calculus of the gradient of $\cJ_n$, see Proposition \ref{PropPS} below.

Let
\begin{eqnarray*}
\Sigma:=\left\{ (\sigma,\gamma)\in\cC\left([0,1],\R\times X\right): \big(\sigma(0),\gamma(0)\big)=(0,0)\hbox{ and }\big(\sigma(1),\gamma(1)\big)=(0,v_0)\right\}
\end{eqnarray*}
and
\begin{eqnarray*}
c_{\cJ_n}:=\inf_{(\sigma,\gamma)\in\Sigma}\sup_{t\in [0,1]}\cJ_n\big(\sigma(t),\gamma(t)\big),\quad c_{\cJ}:=\inf_{(\sigma,\gamma)\in\Sigma}\sup_{t\in [0,1]}\cJ\big(\sigma(t),\gamma(t)\big).
\end{eqnarray*}

As observed in \cite[Lemma 4.1]{HIT}, using the relation, respectively, between $\cI$ and $\cJ$ and $\cI_n$ and $\cJ_n$, we have that 
\begin{equation}\label{cicij}
c_{\cI}=c_{\cJ}, \qquad c_{\cI_n}=c_{\cJ_n}.
\end{equation}

Since, for any $n\in \N$, $\cJ_{n}\le \cJ_{n+1}\le \cJ$, we have that the sequence $\{c_{\cJ_{n}}\}$ is increasing and bounded from above by $c_\cJ$, and so there exists $\bar c>0$ such that $c_{\cJ_{n}}\to \bar c$, as $n \to +\infty$.


\begin{Prop}\label{PropPS}
There is a sequence $\{(\sigma_n,u_n)\}$ in $\R\times X$ such that
\begin{enumerate}[label=(\roman*),ref=\roman*]
	\item \label{3.2i} $|\cJ_n(\sigma_n,u_n)-\bar c|=o_n(1)$;
	\item \label{3.2ii} $|\sigma_n|=o_n(1)$;
	\item \label{3.2iii} $\|\nabla \cJ_n(\sigma_n,u_n)\|=o_n(1)$;
	\item \label{3.2iv} $\{u_n\}$ is bounded in $X$.
\end{enumerate}
\end{Prop}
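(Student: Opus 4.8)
The statement combines the abstract minimax theorem of Willem (\cite[Theorem 2.8]{Willem}) applied to $\cJ_n$ on $\R\times X$ with the preliminary observations $c_{\cI_n}=c_{\cJ_n}$ (the identity \eqref{cicij}) and $c_{\cJ_n}\to\bar c$. The first move is to verify that $\cJ_n$ has the Mountain Pass geometry on $\R\times X$ at level $c_{\cJ_n}$: the endpoints $(0,0)$ and $(0,v_0)$ lie outside a sphere of radius $\rho$ in $\{0\}\times X\cong X$ on which $\cJ_n(0,\cdot)=\cI_n\ge c>0$ by Lemma \ref{lemv0}\eqref{MPIi}, while $\cJ_n(0,v_0)=\cI_n(v_0)<0$ by Lemma \ref{lemv0}\eqref{MPIii}; and any path in $\Sigma$ must cross that sphere because its $\sigma$-component returns to $0$. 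Hence Willem's theorem produces a Palais--Smale sequence $\{(\sigma_n,u_n)\}$ at the level $c_{\cJ_n}$, which already gives \eqref{3.2i} once one uses $c_{\cJ_n}\to\bar c$, and gives $\|\nabla\cJ_n(\sigma_n,u_n)\|=o_n(1)$, i.e. \eqref{3.2iii}. The subtle point is that the PS sequence should be produced at a \emph{controlled} level as $n$ varies; here one invokes the deformation/minimax machinery for each fixed $n$ to get $(\sigma_n,u_n)$ with $\cJ_n(\sigma_n,u_n)\le c_{\cJ_n}+1/n$ and $\|\nabla\cJ_n(\sigma_n,u_n)\|\le 1/n$, and since $c_{\cJ_n}\to\bar c$, items \eqref{3.2i} and \eqref{3.2iii} follow.

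Next I would extract \eqref{3.2ii} and \eqref{3.2iv} from a Pohozaev-type identity encoded in the $\sigma$-derivative of $\cJ_n$. Writing out $\partial_\sigma\cJ_n(\sigma_n,u_n)$ explicitly gives
\[
(N-2)e^{(N-2)\sigma_n}\|\nabla u_n\|_2^2-(N+\alpha)e^{(N+\alpha)\sigma_n}\mathcal F(u_n)+(N+\beta)e^{(N+\beta)\sigma_n}\mathcal G_n^{\sigma_n}(u_n)+R_n,
\]
where $\mathcal G_n^{\sigma_n}(u_n):=\int_{\R^N}\vp_n(e^{\sigma_n}x)(I_\beta\ast G(u_n))G(u_n)\,dx$ and $R_n$ is the extra term coming from differentiating $\vp_n(e^\sigma x)$ in $\sigma$ — this is the term ``which appears in the calculus of the gradient of $\cJ_n$'' flagged before the Proposition. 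The bound $|\nabla\vp_n(x)|\,|x|\le c$ makes $R_n$ harmless: $|R_n|\le c\, e^{(N+\beta)\sigma_n}\int_{\R^N}(I_\beta\ast G(u_n))G(u_n)\,dx$ up to adjusting constants, in fact $R_n$ is comparable to $\mathcal G_n^{\sigma_n}(u_n)$ itself, so it can be absorbed. Combining $\partial_\sigma\cJ_n=o_n(1)$ with $\cJ_n(\sigma_n,u_n)=\bar c+o_n(1)$ and $\langle\partial_u\cJ_n(\sigma_n,u_n),\cdot\rangle=o_n(1)$ (which, tested against $u_n$ itself and using that $\|u_n\|$ is not yet known to be bounded, must be handled with the scaling-invariance of the norm $\|\nabla\cdot\|_2$ under $u\mapsto u(e^{-\sigma}\cdot)$ up to the factor $e^{(N-2)\sigma}$), one gets a closed system of asymptotic relations among $e^{(N-2)\sigma_n}\|\nabla u_n\|_2^2$, $e^{(N+\alpha)\sigma_n}\mathcal F(u_n)$ and $e^{(N+\beta)\sigma_n}\mathcal G_n^{\sigma_n}(u_n)$.

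The heart of the argument — and the step I expect to be the main obstacle — is to deduce from these relations that $\{u_n\}$ is bounded in $X$ and that $\sigma_n\to 0$. Since all three nonlocal quantities are nonnegative (recall $\mathcal G\ge 0$ by the sign condition in \eqref{h1}), the Pohozaev relation $\partial_\sigma\cJ_n\approx 0$ forces $e^{(N-2)\sigma_n}\|\nabla u_n\|_2^2$ to be bounded away from $0$ and $\infty$ at the Mountain Pass level, which together with $\cJ_n(\sigma_n,u_n)\to\bar c$ pins down $e^{(N+\alpha)\sigma_n}\mathcal F(u_n)$; the positivity of $\mathcal G$ keeps the remaining term from exploding. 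One then argues by contradiction: if $\|\nabla u_n\|_2\to\infty$ (along a subsequence) then $e^{(N-2)\sigma_n}\to 0$, so $e^{\sigma_n}\to 0$, and since $\alpha,\beta>0$ one checks that the scalings $e^{(N+\alpha)\sigma_n}\mathcal F(u_n)$ and $e^{(N+\beta)\sigma_n}\mathcal G_n^{\sigma_n}(u_n)$ cannot compensate in \eqref{h1}--\eqref{ass0}, using the Hardy--Littlewood--Sobolev control $\mathcal F(u_n)\le C\|u_n\|_{2^*}^{2(N+\alpha)/(N-2)}$ and Sobolev; this forces $\cJ_n(\sigma_n,u_n)\to+\infty$ or $\to 0$, contradicting convergence to $\bar c>0$. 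A symmetric argument rules out $\|\nabla u_n\|_2\to 0$. Hence $\{u_n\}$ is bounded, which is \eqref{3.2iv}, and then $e^{(N-2)\sigma_n}\|\nabla u_n\|_2^2$ bounded below forces $\sigma_n$ bounded; finally, re-reading the Pohozaev relation with $\{u_n\}$ bounded and $\{\sigma_n\}$ bounded and comparing with $\cJ_n(\sigma_n,u_n)\to\bar c$, any nonzero limit of $\sigma_n$ would shift the balance between the $e^{(N-2)\sigma_n}$, $e^{(N+\alpha)\sigma_n}$ and $e^{(N+\beta)\sigma_n}$ weights in a way incompatible with $\partial_\sigma\cJ_n\to 0$ unless the limiting nonlocal quantities vanish — which they cannot, again by the level being $\bar c>0$. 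Thus $\sigma_n\to 0$, giving \eqref{3.2ii}, and the proof is complete.
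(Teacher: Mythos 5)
Your overall frame (Willem's quantitative minimax theorem plus a Pohozaev-type relation coming from $\partial_\sigma\cJ_n$) is the right one, but two of the four conclusions are reached by routes that do not work. First, item (\ref{3.2ii}) cannot be extracted from the asymptotic identities $\cJ_n(\sigma_n,u_n)\to\bar c$, $\partial_\sigma\cJ_n\to 0$, $\partial_u\cJ_n[u_n]\to 0$: the functional satisfies $\cJ(\sigma',u(e^{\sigma'-\sigma}\cdot))=\cJ(\sigma,u)$, so any approximate critical point generates a curve of approximate critical points with arbitrary $\sigma$-component, and under the general hypotheses \eqref{h1}--\eqref{xi1} the quantities $\int_{\R^N}(I_\a\ast F(u_n))f(u_n)u_n\,dx$ and $\mathcal F(u_n)$ are unrelated (no homogeneity), so the ``closed system of asymptotic relations'' you invoke does not close. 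In the paper, $|\sigma_n|=o_n(1)$ is an output of \cite[Theorem 2.8]{Willem} itself: after a diagonalization producing paths $\gamma_n\in\Gamma$ with $\sup_t\cJ_n(0,\gamma_n(t))\le c_{\cJ_n}+o_n(1)$ and $|c_{\cJ_n}-\bar c|=o_n(1)$, the theorem locates $(\sigma_n,u_n)$ at distance $o_n(1)$ from $\{0\}\times\gamma_n([0,1])$, whence $|\sigma_n|\to 0$ for free. Without stating this localization property, (\ref{3.2ii}) is unproved.

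Second, the boundedness step. Your claim that $\partial_\sigma\cJ_n\approx 0$ alone ``forces $e^{(N-2)\sigma_n}\|\nabla u_n\|_2^2$ to be bounded away from $0$ and $\infty$'' is unjustified (that relation contains the negative term $-(N+\a)e^{(N+\a)\s_n}\mathcal F(u_n)$, which could a priori be arbitrarily large), and the ensuing contradiction argument is circular, since it uses control on $\sigma_n$ that you only establish afterwards. The correct step is a specific algebraic cancellation: compute $\cJ_n(\s_n,u_n)-\tfrac1{N+\a}\,\partial_\sigma\cJ_n(\s_n,u_n)=\bar c+o_n(1)$, so that the $\mathcal F$-term drops out and one is left with
\begin{align*}
&\Bigl(1-\tfrac{N-2}{N+\a}\Bigr)e^{(N-2)\s_n}\|\nabla u_n\|_2^2
+\Bigl(1-\tfrac{N+\b}{N+\a}\Bigr)e^{(N+\b)\s_n}\int_{\R^N}\vp_n(e^{\s_n}x)\bigl(I_\b\ast G(u_n)\bigr)G(u_n)\,dx\\
&\qquad-\tfrac1{N+\a}\,e^{(N+\b)\s_n}\int_{\R^N}\bigl(\nabla\vp_n(e^{\s_n}x)\cdot e^{\s_n}x\bigr)\bigl(I_\b\ast G(u_n)\bigr)G(u_n)\,dx=\bar c+o_n(1),
\end{align*}
where every term on the left is nonnegative: the second because $\a\ge\b$ and $G\ge 0$, the third because $\vp_n$ is radially decreasing, so $\nabla\vp_n(x)\cdot x\le 0$. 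Your proposal to ``absorb'' the cut-off term using the size bound $|x||\nabla\vp_n(x)|\le c$ misses that it is the sign of this term, not its size, that is used; an absorption into the $\mathcal G$-term would in any case fail when $\a=\b$, where the coefficient $1-\tfrac{N+\b}{N+\a}$ is exactly zero. With the identity above and $\s_n\to 0$ already in hand from (\ref{3.2ii}), the first term immediately bounds $\|\nabla u_n\|_2$, which is (\ref{3.2iv}).
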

\begin{proof}
In view of \eqref{cicij}, for any $n\geq 1$ we find $\gamma_{k,n}\in\Gamma$ such that
$$\sup_{t\in [0,1]}\cI_{k}(\gamma_{k,n}(t))\leq c_{\cJ_{k}}+\frac{1}{n}$$
and, for sufficiently large $k$,  
$$|c_{\cJ_k}-\bar c|\leq \frac{1}{n}$$
also holds. 
Therefore, passing to a subsequence with a diagonalization argument, we may assume that there exists $\gamma_{n}\in\Gamma$ (hence $(0,\gamma_{n})\in\Sigma$) such that
$$\sup_{t\in [0,1]}\cJ_{n}(0,\gamma_{n}(t))\leq c_{\cJ_{n}}+o_n(1) \quad \hbox{and}\quad  |c_{\cJ_n}-\bar c|\leq o_n(1).$$
Thus, by \cite[Theorem 2.8]{Willem}, for any $n\ge 1$
there is $(\sigma_{n},u_{n})\in \R\times X$ such that (\ref{3.2i})--(\ref{3.2iii}) hold.\\
Since $\cJ_{n}(\sigma_{n},u_{n})=\bar c+o_n(1)$ and $\de_\s \cJ_n(\sigma_{n},u_{n})=o_n(1)$, we have
\begin{align*}
&\left(1-\frac{N-2}{N+\a}\right)e^{(N-2)\s_{n}}\int_{\R^N}|\nabla u_{n}|^2\, dx
+\left(1-\frac{N+\b}{N+\a}\right)e^{(N+\b)\s_{n}}\int_{\R^N}\vp_{n}(e^{\sigma_{n}}x) \big(I_\beta\ast G(u_{n})\big)G(u_{n})\,dx\\
&\qquad-\frac{1}{N+\a}e^{(N+\b)\s_{n}}\int_{\R^N}\big(\nabla\vp_{n}(e^{\sigma_{n}}x)\cdot e^{\sigma_{n}} x \big)\big(I_\beta\ast G(u_{n})\big)G(u_{n})\,dx=\bar c+o_n(1).
\end{align*}
Since the cut-off functions $\vp_{n}$ are decreasing with respect to the radius, we have that $\n \vp_{n}(x)\cdot x\le 0$, for any $x\in \RN$ and so, 
being $\a\ge \b$, we infer that $\{u_{n}\}$ is a bounded sequence in $X$. 
\end{proof}

We can now conclude the proof of our main theorems.

\begin{proof}[Proof of Theorems \ref{ThMain} and \ref{ThMain2}]
Let $\{(\sigma_n,u_n)\}$ in $\R\times X$ be the sequence found in Proposition \ref{PropPS}. Then
there exists $u_0\in X$ such that $u_n \rightharpoonup  u_0$ weakly in $X$ and a.e. on $\R^N$. By Lemma \ref{le:BL1} and Lemma \ref{pippa},  for any $\psi\in\cC_0^{\infty}(\R^N)$, we have that
\begin{equation*}
\int_{\R^N}\nabla u_0 \cdot \n \psi\, dx
=\int_{\R^N} \big(I_\a\ast F(u_0)\big)f(u_0)\psi\,dx
-\int_{\R^N} \big(I_\beta\ast G(u_0)\big)g(u_0)\psi\,dx.
\end{equation*}
So we have that $u_0$ is a weak solution of \eqref{eq}. We will prove that $u_0\neq 0$.
\\
Observe that, by Proposition \ref{PropPS}, since $\{u_n\}$ is bounded in $X$ and $\de_u \cJ_n(\sigma_n,u_n)[u_n]=o_n(1), $ we deduce that there exists $C>0$ such that, for any $n\ge 1$, 
\[
\int_{\R^N} \vp_n(x) \big(I_\b\ast G(u_n)\big)g(u_n)u_n\,dx\le C.
\]
Therefore, by Fatou's Lemma
\begin{equation}\label{u0l1}
\int_{\R^N}\big(I_\b\ast G(u_0)\big)g(u_0)u_0\,dx\le \liminf_{n} \int_{\R^N} \vp_n(x) \big(I_\b\ast G(u_n)\big)g(u_n)u_n\,dx\le C.
\end{equation}
For any  $m\ge 1$, let 
\[
\psi_m(x)=
\begin{cases}
1 & \hbox{if }|x|\le m,
\\[3mm]
\dis \frac{2m -|x|}{m} & \hbox{if  }m\le |x|\le 2m,
\\[3mm]
0 & \hbox{if }|x|\ge 2m.
\end{cases}
\] 
Observe that, for any $m\ge 1$, we have that $\psi_m u_0$ belongs to $X$.
Note that $\psi_mu_0$ has a compact support and $\de_u \cJ_n(\sigma_n,u_n)[\psi_m u_0]=o_n(1)$. Therefore,  arguing as in Lemma \ref{le:BL1} and in Lemma \ref{pippa}, passing to the limit as $n \to +\infty$, we have that for any $m\ge 1$
\begin{equation}\label{almsol}
\int_{\R^N}\nabla u_0 \cdot \n (\psi_m u_0)\, dx
=\int_{\R^N} \big(I_\a\ast F(u_0)\big)f(u_0)\psi_m u_0\,dx
-\int_{\R^N} \big(I_\beta\ast G(u_0)\big)g(u_0)\psi_m u_0\,dx.
\end{equation}
Being $u_0\in X$, we have
\begin{equation}\label{u0psi}
\begin{split}
&\left|\int_{\R^N}\nabla u_0 \cdot \n (\psi_m u_0)\, dx-
\int_{\R^N}|\nabla u_0|^2\, dx\right|\\
&\qquad\le
\int_{\R^N}|\nabla u_0 |^2 |\psi_m -1|\, dx
+\int_{\R^N}|\nabla u_0||u_0|  |\n \psi_m|\, dx \\
&\qquad\le
\int_{B_m^c}|\nabla u_0 |^2 \, dx
+\Big(\int_{A_m}|\nabla u_0|^2\, dx\Big)^\frac 12
\Big(\int_{A_m}|u_0|^{2^*}\, dx\Big)^\frac 1{2^*}
\Big(\int_{A_m}|\nabla \psi_m|^N\, dx\Big)^\frac 1N
\\
&\qquad\le
\int_{B_m^c}|\nabla u_0 |^2 \, dx
+ C \Big(\int_{B_{m}^c}|\nabla u_0|^2\, dx\Big)^\frac 12
\Big(\int_{B_{m}^c}|u_0|^{2^*}\, dx\Big)^\frac 1{2^*}\\
&\qquad=o_m(1),
\end{split}
\end{equation}
where $A_m:= B_{2m}\setminus B_m$.\\
Moreover, observe that 
\[
\big(I_\a\ast F(u_0)\big)f(u_0)\psi_m u_0 \to \big(I_\a\ast F(u_0)\big)f(u_0) u_0,\quad
\text{ a.e. in } \RN, \text{ as } m \to +\infty,
\]
and
\[
\left|\big(I_\a\ast F(u_0)\big)f(u_0)\psi_m u_0\right|\le \left|\big(I_\a\ast F(u_0)\big)f(u_0) u_0 \right|\in L^1(\RN).
\]
Thus, by the Dominated  Convergence Theorem, we have that
\begin{equation}\label{Fpsi}
\lim_m \int_{\R^N} \big(I_\a\ast F(u_0)\big)f(u_0)\psi_m u_0\,dx=
\int_{\R^N} \big(I_\a\ast F(u_0)\big)f(u_0)u_0\,dx.
\end{equation}
Analogously, we have also that
\[
\big(I_\b\ast G(u_0)\big)g(u_0)\psi_m u_0 \to \big(I_\b\ast G(u_0)\big)g(u_0) u_0, \quad
\text{ a.e. in } \RN, \text{ as } m \to +\infty,
\]
and, using \eqref{u0l1},
\[
0\le \big(I_\b\ast G(u_0)\big)g(u_0)\psi_m u_0\le \big(I_\b\ast G(u_0)\big)g(u_0) u_0 \in L^1(\RN).
\]
Again the Dominated  Convergence Theorem implies
\begin{equation}\label{Gpsi}
\lim_m \int_{\R^N} \big(I_\b\ast G(u_0)\big)g(u_0)\psi_m u_0\,dx=
\int_{\R^N} \big(I_\b\ast G(u_0)\big)g(u_0)u_0\,dx.
\end{equation}
Therefore, by \eqref{almsol}, \eqref{u0psi}, \eqref{Fpsi} and \eqref{Gpsi}, we have
\begin{equation*}
\int_{\R^N}|\nabla u_0|^2\, dx
=\int_{\R^N} \big(I_\a\ast F(u_0)\big)f(u_0)u_0\,dx
-\int_{\R^N} \big(I_\beta\ast G(u_0)\big)g(u_0)u_0\,dx.
\end{equation*}
By Lemma \ref{le:BLnew} and \eqref{u0l1},
since  $\de_u \cJ_n(\sigma_n,u_n)[u_n]=o_n(1)$, we infer that
\begin{align*}
\limsup_{n} \int_{\R^N}|\nabla u_n|^2\, dx
&=\limsup_{n} \left[\int_{\R^N} \big(I_\a\ast F(u_n)\big)f(u_n)u_n\,dx
-\int_{\R^N}\vp_n(x)  \big(I_\beta\ast G(u_n)\big)g(u_n)u_n\,dx\right]
\\
&\le \int_{\R^N} \big(I_\a\ast F(u_0)\big)f(u_0)u_0\,dx
-\int_{\R^N} \big(I_\beta\ast G(u_0)\big)g(u_0)u_0\,dx
=\int_{\R^N}|\nabla u_0|^2\, dx.
\end{align*}
This implies that $u_n\to u_0$ strongly in $X$.
Thus, since $\cJ_n(\sigma_n,u_n)\to \cI(u_0)$, we have that $\cI(u_0)=\bar c>0$ and so $u_0$ is a nontrivial weak solution of \eqref{eq}.
\end{proof}


\begin{Rem}
By the inspection of the proof of Theorems \ref{ThMain} and \ref{ThMain2}, we deduce that $c_{\cI_{n}}=c_{\cJ_{n}}$ are attained, for any $n\geq 1$.
\end{Rem}

\begin{altproof}{Theorem \ref{ThMain3}}
The proof is a slight modification of our previous arguments. Here we just want to comment \eqref{xi1'}. The change of assumption in the different cases is due to the scaling properties of the functional $\K_\omega$. 
	Indeed, setting $u_\lambda(x):=u(x/\lambda)$, for $\lambda>0$,  when $\alpha=\beta$ we have
	\[
	\K_\omega(u_\lambda)=	\lambda^{N-2}\|\nabla u\|_2^2
	+\omega\lambda^{N}\|u\|_2^2
	-\lambda^{N+\a} \big(\mathcal{F}(u)
	- \mathcal{G}(u)\big).
	\]
	Thus, to show the Mountain Pass geometry, if $\alpha=\beta>0$, we can proceed as in Lemma \ref{lemv0}, but if $\alpha=\beta=0$ (the local case), we need a stronger condition, namely we need to take into account the term $\omega s_0^2$ in order to show that $\K_\omega(u_\lambda)<0$ for large $\lambda$ (see also \cite{BL1}).
\end{altproof}

\begin{altproof}{Corollary \ref{th:cubicquintic}}
Item (\ref{a14}) follows from Theorem \ref{ThMain3}.\\
To prove (\ref{b14}), observe that, only in the local case $\alpha=\beta=0$, $\omega_0$ is finite. Thus, in such a case, if $\omega\geq \omega_0$, then $F^2(s)-G^2(s)-\omega_0s^2\leq 0$ for $s\in\R$ and  there are no nontrivial solutions (see e.g. \cite{BL1}).\\
If, instead, $\omega\leq 0$,
similarly as in \cite[Theorem 3]{MVTrans}, if $u\in H^1(\R^N)$ solves \eqref{eq2} with \eqref{eq:CubicQuintic}, then we obtain the following Pohozaev identity
\[
\|\nabla u\|_2^2
=-\omega\frac{N}{N-2}\|u\|_2^2
+\frac{N+\alpha}{q(N-2)}\int_{\R^N}\big(I_\alpha\ast |u|^{q}\big)|u|^{q}\,dx
-\int_{\R^N}\big(I_\beta\ast |u|^{\frac{N+\beta}{N-2}}\big) |u|^{\frac{N+\beta}{N-2}}\,dx
\] 
and, taking into account  $\K_\omega'(u)[u]=0$, i.e.
\[
\|\nabla u\|_2^2
=-\omega\|u\|_2^2
+\int_{\R^N}\big(I_\alpha\ast |u|^{q}\big)|u|^{q}\,dx
-\int_{\R^N}\big(I_\beta\ast |u|^{\frac{N+\beta}{N-2}}\big) |u|^{\frac{N+\beta}{N-2}}\,dx,
\]
we infer that $u=0$.
\end{altproof}


\subsection*{Acknowledgments}
The authors wish to thank the anonymous referees for their valuable suggestions and remarks.

\end{document}